\documentclass[12pt, oneside]{article}  
\pdfoutput=1 	
\usepackage[margin=.75in]{geometry}      
\usepackage{enumitem}
\geometry{letterpaper}                   		
\usepackage{graphicx}				
\usepackage{amsmath}
\usepackage{amssymb}
\usepackage{amsthm}
\usepackage{cleveref}
\usepackage[margin=.6in, font=small]{caption}
\usepackage{fancyhdr}
\pagestyle{fancy}
\lhead{Distance and intersection number in $\mathcal C(S)$}
\rhead{Birman, Morse, Wrinkle}

\newtheorem{theorem}{Theorem}[section]
\newtheorem{lemma}[theorem]{Lemma}

\newtheorem{proposition}[theorem]{Proposition}

\theoremstyle{definition}
\newtheorem{definition}[theorem]{Definition}
\newtheorem{example}[theorem]{Example}
\newtheorem{remark}{Remark}

\def\D{{\em Dot}}

\newcommand\Dec[1]{ Dec_{#1}(S) }

\usepackage{marginnote,cprotect}
\usepackage[obeyFinal]{todonotes}
\usepackage[color]{changebar}
\usepackage{soul}                     
\usepackage{ifdraft}
\setlength{\changebarwidth}{4pt}

\definecolor{revcol1}{rgb}{0.0078,0.2980,0.7961}
\definecolor{revcol2}{rgb}{0.6118,0.1765,1.0000}
\definecolor{revcol3}{rgb}{1.0000,0.6431,0.2627}





\date{\today}				

\title{Distance and intersection number in the curve graph of a surface}
\author{Joan Birman, Matthew J. Morse, and Nancy C. Wrinkle}

\begin{document}
\maketitle
\begin{abstract}

In this work, we study the cellular decomposition of $S$ induced by a filling pair of curves $v$ and $w$, $Dec_{v,w}(S) = S \setminus (v \cup w)$, and its connection to the distance function $d(v,w)$ in the curve graph of a closed orientable surface $S$ of genus $g$.
 Efficient geodesics were introduced by the first author in joint work with Margalit and Menasco in 2016, giving an algorithm that begins with a pair of non-separating filling curves that determine vertices $(v,w)$ in the curve graph of a closed orientable surface $S$ and computing from them a finite set of {\it efficient} geodesics.  
 We extend the tools of efficient geodesics to study the relationship between distance $d(v,w)$, intersection number $i(v,w)$, and $Dec_{v,w}(S)$.  The main result is the development and analysis of particular configurations of rectangles in $Dec_{v,w}(S)$ called \textit{spirals}.  We are able to show that, with appropriate restrictions, the efficient geodesic algorithm can be used to build an algorithm that reduces $i(v,w)$ while preserving $d(v,w)$.  At the end of the paper, we note a connection between our work and the notion of extending geodesics.
\end{abstract}

\section{Introduction} \label{s:introduction}  
The {\it curve graph} $\mathcal C(S)$ of a closed orientable surface $S=S_g$ of genus $g \geq 2$ is the metric graph whose vertices correspond to isotopy classes of essential simple closed curves in $S$.
Edges join vertices that have disjoint representatives in $S$.
Each edge is defined to have length 1. 
Let $v,w$ be vertices in $\mathcal{C}(S)$.
The {\it distance} $d(v,w)$ is the length of a shortest path in the curve graph from $v$ to $w$.
Any shortest path is a {\it geodesic} from $v$ to $w$ in $\mathcal C(S)$. We often denote a geodesic between two vertices $v$ and $w$ by listing the intermediate curves comprising the vertices in the path connecting them: $v=v_0, v_1, v_2, \ldots, v_{d-1}, v_d=w$. 

In 2002, the PhD thesis of Jason Leasure \cite{Lea} presented an algorithm to compute the exact distance between two vertices  of $C(S)$, noting that there was no hope of implementing his algorithm for concrete computations.
Fourteen years later, Birman, Margalit and Menasco \cite{BMM} produced a much faster algorithm in a spirit similar to that of \cite{Lea}. The same year, the efficient geodesic algorithm of \cite{BMM} was partially implemented in the computer program MICC (Metric in the Curve Complex). A wealth of low-distance examples provided by the program has motivated much of our work. The paper \cite{GMMM} explains the implementation of the algorithm and the results derived from it; the program itself and its documentation is available for free download at \cite{MICC}. Our work in this paper was inspired by a desire to use the data provided by \cite{MICC} to begin to create a bridge between the newly available tools in \cite{BMM} and the mainstream work of the past 20 years on the large-scale geometry of the curve graph (starting with \cite{MM-I}, \cite{MM-II}.) 

Here is a guide to this paper and a description of its content.   The material in $\S$\ref{s:background} is background for the rest of the paper, consisting of results that are either known or close to known.  
In $\S$\ref{ss:decomposition}, we study the decomposition of the surface $S$ we obtain by cutting it open along a pair of filling curves into polygons. We derive several interesting and useful equations from Euler characteristic equations, which guide us on how to approach our work. In $\S$\ref{ss:efficient}, we review the material from \cite{BMM} that we will need later, including efficient geodesics and surgery.  After that, we will be ready to begin our new work. In $\S$\ref{s:spiral surgery}, we describe how to extend the surgeries reviewed in $\S$\ref{ss:efficient} to the endpoints of a geodesic.  In $\S$\ref{subsection:motivating eg}, we do a simple motivating example. In $\S$\ref{subsection:spirals}, we define {\em spirals} and explore their properties. In $\S$\ref{subsection:spiral surgery}, we describe new geometric realizations of the surgeries from $\S$\ref{ss:efficient}, which we call {\em spiral surgery}. In $\S$\ref{s:reduce-i}, we consider  how to recognize when we can perform surgery on the endpoints of a path in the curve graph, that is, when it will preserve adjacency of vertices in the curve graph and when it will preserve distance.  In $\S$\ref{ss:extended-dot-graphs}, we extend the notion of the dot graph reviewed in $\S$\ref{ss:efficient} to give criteria for when surgery will maintain adjacency in the curve graph, and in $\S$\ref{ss:main-theorem}
we apply the notion of the stacked, extended dot graph to the setting of a spiral to prove in Theorem~\ref{theorem:main} that the conditions in $\S$\ref{ss:extended-dot-graphs} describe the criteria under which spiral surgery preserves both efficiency and distance. In our conclusion, $\S$\ref{sec:conclusion}, we explore future directions implied by this work. Finally, we have included an Appendix with an extended example of spiral surgery on a familiar example of Hempel \cite{SS}, reducing its intersection number to the minimum for its decomposition. The details of that example point to several interesting future directions of work. \\

\noindent {\bf Acknowledgements:} We are grateful for an insightful counterexample offered to us by the referee for an earlier version of this work, which greatly improved the main theorem of this paper, as well as the careful reading they gave the manuscript. We are also grateful for helpful, clarifying conversations with Bill Menasco, Alex Rasmussen, Nick Salter, and Sam Taylor.  
The first author thanks the Simons Foundation for partial support, 
during the initial phases of this work, from a Collaborative Research 
Grant,  Award 245711. 
The third author thanks the initiative ``A Room of One's Own" for financial support and focused time to write. 
\section{Background} \label{s:background} We begin by examining the decomposition of a closed orientable surface obtained by cutting it open along two essential curves, then connecting the decomposition to the study of intersection number and distance in $C(S)$. Then we dive into some necessary background with a review of the tools and results from \cite{BMM} that we will use throughout the paper.

Throughout this paper, we assume that vertices in the curve graph are isotopy classes of  {\it non-separating} curves on the surface $S$ (see \cite{BM} for the underlying reasons for this assumption, and see \cite{Ras} for a review of properties of the non-separating curve graph.) We assume further that paths  in the curve graph $\mathcal C(S)$ that join vertices are always chosen to be efficient in the sense of \cite{BMM} (see Section \ref{ss:efficient} for the definitions of efficient paths.) 
We will follow the conventions of the literature in abusing notation to denote vertices in the curve graph $\mathcal C(S)$ as well as representative curves in each isotopy class by $v, w$. It will be clear from context when $v$ is a curve or an isotopy class of curves. 
We assume that each component of $S\setminus v \cup w$ is a topological disc; such a $v$ and $w$ are a \textit{filling pair} and are said to \textit{fill} $S$.
This condition implies that $d(v,w) \geq 3$. 
In addition, we assume that $v$ and $w$ are in \textit{minimal position}: $|v \cap w|$ is minimized with respect to the isotopy classes of $v$ and $w$. 
This implies that there are no bigons in $v\cup w$.
We then define the intersection number of $v$ and $w$, $i(v,w) = | v \cap w|$.

\subsection{The decomposition $\Dec{v,w}$} 
\label{ss:decomposition}

Much of the work of this paper is based on studying $S \setminus (v \cup w)$, the result of cutting $S$ open along $v$ and $w$ , which we refer to as the \textit{decomposition of $S$ by $v \cup w$} and denote by $\Dec{v,w}$. 
Since we assume the curves fill $S$, we know that $S \setminus (v \cup w)$ is a union of discs.
 Each disc is isotopic to a polygon with alternating edges in $v$ and $w$. We observe that each polygon has an even number of total sides. $\Dec{v,w}$ consists of more than a simple list of polygons of various sizes, however. $\Dec{v,w}$ should also keep track of how the cut-open curves are identified to reassemble the surface, so along with the polygons themselves we include the polygonal edge identifications, which means two copies of labels $1, 2, \ldots, i(v,w)$, one for each copy of $v$ and $w$. Throughout this paper, we use green labels for $v$ and red labels for $w$. 
 We note that this identification is also guided by the fact that our surface is orientable: we are gluing polygons together so that the ``positive" sides of the surface face out. 
Note that, while vertices in the curve graph are not oriented curves, our labeling requires an arbitrary choice of orientation. 
This choice does not affect any subsequent results.

\begin{figure}[ht]
  \centering
  \includegraphics[width=6in]{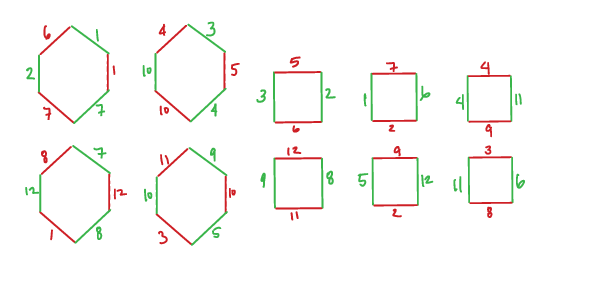}
\caption{A polygonal decomposition $\Dec{v,w} = S_g-\{v,w\}$. Here $g = 2, i=12,$ and $d=4$ (distance verified by MICC,) and the polygons have their positive side facing the reader. To see the same decomposition glued together along its red $w$-edges, see Figure \ref{fig:d3-to-d4}-right.}
  \label{fig:d4-polygons-only} 
\end{figure}

 It is useful and straightforward to understand the number and types of polygons that appear in $\Dec{v,w}$ for general $v, w,$ and $S_g$ under the assumptions above. Let $F_{2k}$ be the number of polygons in $\Dec{v,w}$ with $2k$ sides, for $k \geq 2 $ (since there are no bigons).
 We will refer to such polygons as \textit{$2k$-gons}.
 We call the vector of the numbers of each size of polygon for $k > 2$ the {\it decomposition vector} $\mathcal{F}=\{F_6, F_8, \ldots, F_{2k}, \ldots \}$. We will derive below that the length of this vector is, in fact, $4g-4$. Two decompositions are {\em equivalent} if they have the same decomposition vector and the labels on $v$ and/or $w$ differ by a cyclic permutation.  
 
We first make some general observations about $\Dec{v,w}$. Recall the well-known upper bound for the distance $d(v,w)$ that is attributed to Lickorish, but was first stated by Hempel in \cite{He}, relating distance and intersection number:

\begin{equation}\label{eq:Hempel upper bound}
d(v,w) \leq 2 \  {\rm log}_2(i(v,w)) + 2
\end{equation}  
This inequality can be interpreted in terms of the number of polygons in $\Dec{v,w}$: see Equation \ref{eq: inessential rectangles}, which we now explain.  

Let $V$ and $E$ be the number of vertices and edges in the graph induced by $v \cup w$ on $S$, with $F = |\Dec{v,w}|$
(Note that the vertices and edges in our discussion here about $\Dec{v,w}$ are not vertices in the curve graph.) 
Since every vertex has valence four and every edge touches two vertices, we have  $4V=2E$  or $2V=E$, so that  $-2\chi(S_g) = 4g-4 = 2(-V+E-F) = E-2F = 2V - 2F$. Therefore:
\begin{equation}\label{eq:Eulerchar-2}
F = F_4 +F_6 + F_8 + F_{10}+F_{12} +\cdots
\end{equation}
But now observe that each edge is an edge of exactly 2 polygons, so  that $2E =  4F_4 + 6 F_6 +8F_8 + \cdots$ or 
\begin{equation}
\label{eq:Eulerchar-3}
E =  2V = 2F_4 + 3 F_6 +4F_8 +5F_{10}+6F_{12} + \cdots
\end{equation}
where each term on the right and left of equation (\ref{eq:Eulerchar-3}) is non-negative.  

These Euler characteristic considerations show that:
\begin{equation}\label{eq:Euler characteristic}
4g-4 =  F_6 +2 F_8 +3 F_{10} + \cdots (4g-4)F_{8g-4}
\end{equation}
Equation (\ref{eq:Euler characteristic}) tells us that there are finitely many possibilities for the decomposition of $S$ into polygons larger than rectangles. 
Since every term on both sides of (\ref{eq:Euler characteristic}) is non-negative, we see that there are finitely many different decomposition vectors, and the largest possible polygon in $\Dec{v,w}$ has $8g-4$ sides.  

Note that the number $F_4$ of rectangles  does not appear in (\ref{eq:Euler characteristic}). Combining equations (\ref{eq:Eulerchar-2}) and (\ref{eq:Eulerchar-3}), and noting that $V/4= i(v,w)$,  we obtain the following equation:  

\begin{equation}\label{eq:intersection no and face decomposition}
i (v,w) = F_4 + \mathcal F_g, \ {\rm where} \ \mathcal F_g = (1/2)(3F_6+4F_8+\cdots + (4g-2)F_{8g-4})
\end{equation}
When combined with the inequality in Equation (\ref{eq:Hempel upper bound}), we get our first connection between distance and decomposition: 
\begin{equation}\label{eq: inessential rectangles}
d(v,w) \leq 2 \  {\rm log}_2(F_4 + \mathcal F_g) + 2.    
\end{equation}

Since we already learned that the largest possible polygon in $\Dec{v,w}$ has $8g-4$ sides and we have finitely many possibilities for $\mathcal F_g$, the parameter $F_4$ in (\ref{eq: inessential rectangles}) plays the role of $i(v,w)$ in (\ref{eq:Hempel upper bound}) as $i$ gets large; that is, $i(v,w) \to \infty$ implies that $F_4 \to \infty$.
Given this relationship between intersection number and rectangles, our primary effort in this paper is to provide an algorithm that, given a pair of vertices $(v,w)$, produces a new pair $(v',w')$ with lower intersection number by reducing $F_4$, while preserving both $\mathcal F_g$ and the distance $d(v,w)$. A question arises naturally here: is reducing the number of rectangles the only way to reduce intersection number while preserving distance? Surely not. But in this current project, we have applied the tools from \cite{BMM} to rectangles as a simple first effort, since the observation that $F_4 \to \infty$ as $i(v,w) \to \infty$ gave an obvious initial direction for our work.

In the context of (\ref{eq: inessential rectangles}), observe that the problem of minimizing $i(v,w)$ or $F_4$, while preserving $\mathcal F_g$ and $d(v,w)$, goes hand in hand with the problem of increasing $F_4$  and increasing $d(v,w)$ while preserving $\mathcal F_g$.
This is because one must understand how to recognize when $d(v,w)$ decreases as we reduce $F_4$. 
Thus, we will be implicitly studying the growth of $d(v,w)$ as $F_4$ increases as we study the reduction of $F_4$ and preservation of $d(v,w)$.  We discuss this connection further in $\S$\ref {sec:conclusion}. 

\begin{remark}
It is standard to define the minimum intersection number possible for $v$ and $w$ over all choices of vertices $v,w \in \mathcal{C}(S)$ having distance $d$ on the surface $S_g$. That is, the minimum intersection number depends only on distance and genus.  
However, the data told us that, at low distances, $\Dec{v,w}$ influences the minimum intersection number. The data produced by MICC in \cite{GMMM} was mainly for the case $g=2$ and  $d=3$ and 4, with incomplete data for $g=3, d=4$. Although the data we studied from MICC are very specific ($g=2$, $d(v,w)\geq4$), we gained significant insight.  Upon further inspection of these examples, we discovered that, for small $i(v,w)$, the possible face decompositions depend strongly on intersection number.  With that realization, we denote by $i_{min}(d, g, \mathcal{F})$ the minimum intersection number of a non-separating filling curve pair $(v,w)$ with fixed distance $d$, genus $g$, and decomposition vector $\mathcal{F}$. 
\end{remark}
The important property of the data set that we studied is that it contained all isotopy classes of distance $\geq 4$ with $i(v,w) \leq 25$ and genus $g = 2$.
The reasonable size of the examples enabled rapid experimentation and verification of calculations, and the completeness of the data allowed us to make conclusions about $d(v,w)$ with respect to $\Dec{v,w}$.
The key observation from the data that is relevant to this work is that the minimum intersection number $i_{min}(d, g, \mathcal{F})=i_{min}(4,2, \mathcal{F})=12$ is not always realizable for a given $\mathcal{F}$. To be precise, if $ \mathcal{F} = (F_6,F_8,F_{10},F_{12})$ = (4,0,0,0), (2,1,0,0) or (0,2,0,0), then $i_{min}(4, 2, \mathcal{F}) = 12$.
However,  if $\mathcal{F} =(F_6,F_8,F_{10},F_{12})$ = (0,0,0,1) or (1,0,1,0), then $i_{min}(4, 2, \mathcal{F})>12$. 

We still see that for distances 3 and 4, $i_{min}$ depends not only on genus and distance, but also on whether the decomposition in (\ref{eq:Euler characteristic}) is, at one extreme, into a family of $(4g-4)$ hexagons  or, at the other extreme, into a single $(8g-4)$-gon. For distance $3$ and genus $2$, we see that the minimum intersection number, $i=4$, occurs when the decomposition includes a single $12$-gon, while we found an $i=10$ example for a decomposition into 4 $6$-gons (Figure \ref{fig:d3-to-d4}-left,) which we don't know is minimal for that decomposition. 

It is known from the work of Aougab and Taylor \cite{AT} that at `large'  distances, the minimum intersection number $i_{min}(d,g)$ is actually independent of genus. We do not yet see hyperbolic behavior emerging at this low distance and granular level, however, so genus still plays a critical role in the $i_{min}$ considerations. 


\subsection{Background on efficient geodesics and surgery}
\label{ss:efficient} 

The work in this paper is entirely in the realm of {\em efficient} geodesics in the curve graph, which were defined and developed in \cite{BMM}. Before we define efficiency for a geodesic, we quote the following result from \cite{BMM} to motivate their use:

\begin{theorem}
{\bf (Theorem 1.1 of \cite{BMM}):} Let $g \geq 2$. If $v$ and $w$ are vertices of $\mathcal{C}(S_g)$ with $d(v, w) \geq 3$, then there exists an efficient geodesic from $v$ to $w$. What is more, there is an explicitly computable list of at most $n^{6g-6}$ vertices $v_1$ that can appear as the first vertex of an initially efficient geodesic $v = v_0,v_1, \ldots,v_n = w$. In particular, there are finitely many efficient geodesics from $v$ to $w$.
\label{thm:bmm-main}
\end{theorem}


An efficient geodesic from $v$ to $w$ satisfies certain upper bounds on, in a generalized sense, the intersection of each intermediate curve $v_k$ with arcs of $w$. We make precise these conditions in the definitions below. 

\begin{definition} [Reference arc] Let $v_0,\ldots , v_n$ be a geodesic of length at least three in $\mathcal{C}(S)$, and let $v_0, v_1$, and $v_n$ be pairwise in minimal position (this configuration is unique up to isotopy of $S$.) A {\em reference arc} for the triple $v_0, v_1, v_n$ is an arc $\alpha$ that is in minimal position with $v_1$ and whose interior is disjoint from $v_0 \cup v_n$; such arcs were first considered by Leasure [\cite{Lea}, Definition 3.2.1]. See Figure \ref{fig:reference_arcs} for examples. 
\end{definition}

We begin with bounding the intersections of $v_1$ with all possible reference arcs for $v, v_1, w$: 
\begin{definition} [Initially efficient geodesic] Choose $(v,w)$ with $d = d(v,w)\geq 3$.  Let
$\mathcal{G}:\, v=v_0, v_1, \cdots, v_{d-1}, v_d=w$ be a geodesic from $v$ to $w$. Then $\mathcal  G$ is {\bf initially efficient} if $|v_1 \cap \alpha| \leq d-1$ for all $\alpha$, where $\alpha$ is a reference arc for $v, v_1, w$.
  \end{definition} 
  
  Then, if each subgeodesic of the geodesic from $v$ to $w$, meaning from $v_1$ to $w$, from $v_2$ to $w$, and so forth is initially efficient, then we have an efficient geodesic: 
 
 \begin{definition}[Efficient]
A geodesic $\mathcal G$ is {\bf efficient} if the oriented sub-geodesic $v_k,\dots,v_d$ is initially efficient for each $0\leq k\leq d-3$ and the oriented geodesic $v_d, v_{d-1}, v_{d-2}, v_{d-3}$ is also initially efficient.   
\end{definition}

Reference arcs were developed in \cite{BMM} without the context of the polygonal decomposition $\Dec{v,w}$. We realized, however, that one gains a natural intuition for reference arcs and efficiency when considering them in the context of $\Dec{v,w}$. See Figure \ref{fig:reference_arcs} for all possible reference arcs relative to $v, v_1,$ and $w$ for some $2k$-gons. (We note that the reference arcs are parallel to arcs of $w$ in $4$-gons and $6$-gons.) One can see from the figure that initial efficiency, minimizing the intersections of $v_1$ with these reference arcs, means minimizing the number of times $v_1$ travels through a polygon, across all of the different ways $v_1$ can travel through that polygon, across all polygons in the decomposition. To reach efficiency from initial efficiency, we move through the geodesic, bounding at each stage the intersection of $v_k$ with all of the reference arcs relative to $v_{k-1}, v_k,$ and $w$, so we are strictly limiting the behavior of the $v_k$. The surprising accomplishment of the tools created in \cite{BMM} is that one can understand and manage these intersections at each point of an efficient geodesic simultaneously. 

\begin{figure}[ht]
  \centering
  \includegraphics[width=6in]{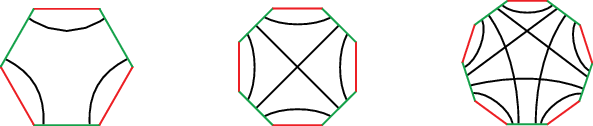}
  \caption{Examples of all possible reference arcs (relative to $v, v_1,$ and $w$,) on $2k$-gons when $k=3,4,5$.  The reference arcs are colored black, $v$ is colored green, and $w$ is red.} 
  \label{fig:reference_arcs}
  \end{figure}
  
Given any geodesic from $v$ to $w$ in the curve graph, \cite{BMM} showed how to make that geodesic efficient by means of surgery, as defined below. The surgery is performed simultaneously on collections of intermediate curves in the geodesic, preserving the adjacency and non-adjacency of those curves in the curve graph, while leaving $v$ and $w$ fixed. 
After a sufficient number of surgeries, the intersection criterion of the efficient geodesic definition is satisfied. 
We review their construction here. 
 
\begin{definition}[Surgery] \label{definition:general-surgery} Given a simple closed curve $v_k$ and an arc $\alpha$ whose endpoints lie on $v_k$ (and whose intersection with $v_k$ is only those endpoints),  {\bf surgery on $v_k$ with surgery arc $\alpha$} consists of gluing one of the two components of $v_k - \partial \alpha$ to $\alpha$ and discarding the other component of $v_k - \partial \alpha$. The choice of component to glue to $\alpha$ is made such that the resulting curve is closed. 
\end{definition}

If the curves were in minimal position before surgery, and if $v_k$ was essential, then it will be essential after surgery, thanks to a variant of the bigon criterion (see, for example, \cite{FLP}, Proposition 3.10.)  

Surgeries in \cite{BMM} were distinguished by whether the joined arcs are above (+) or below (-) the surgery arc $\alpha$, as shown in Figure \ref{fig:surgery}. We will categorize surgeries the same way. The $+$ and $-$ labels in the types of surgery are independent of the orientation of the curves. 
\begin{figure}[ht]
  \centering
  \includegraphics[width=6in]{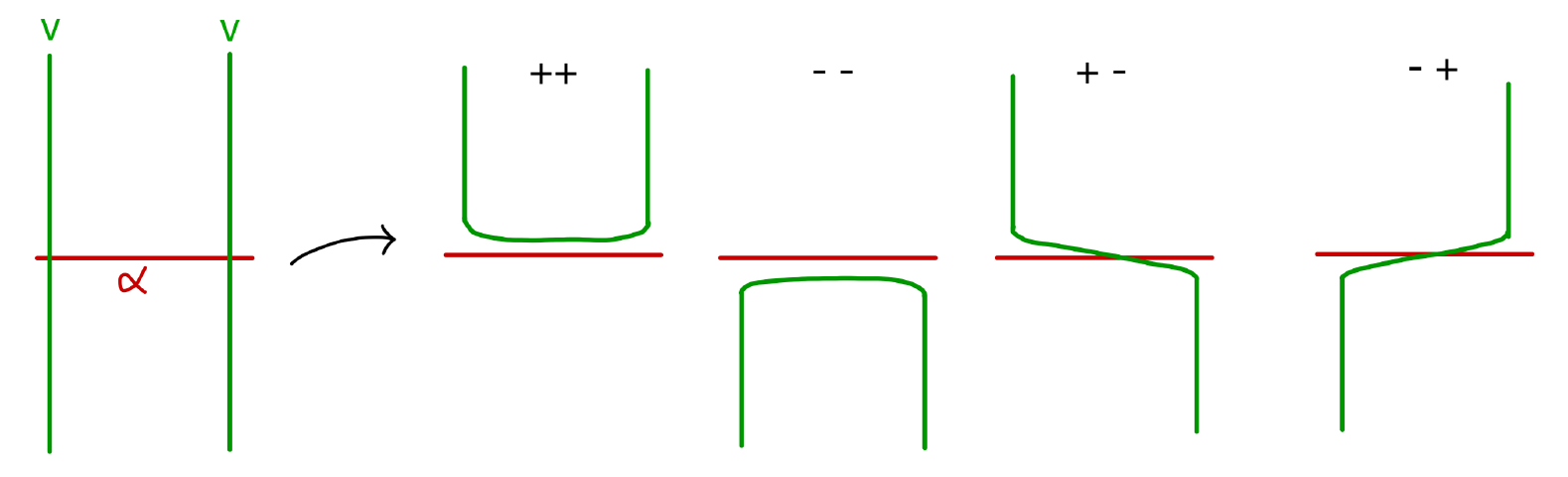}
  \caption{The four different possible surgeries on a curve $v$ with surgery arc $\alpha$, as defined in \cite{BMM}. The surgeries are denoted by whether the arcs joined after surgery are above $(+)$ or below $(-)$ the surgery arc. The unjoined part of the curve is deleted. One observes that surgery reduces $i(\alpha, v)$ by $1$ or $2$.}
  \label{fig:surgery}
\end{figure}

The main challenge in performing simultaneous surgery on a collection of curves is that curves that were disjoint before surgery should stay disjoint. For then, the collection of surgeries sends paths in the curve graph to other paths. Note here that \cite{BMM}'s surgeries were on the intermediate curves in a path, leaving endpoints $v$ and $w$ fixed, so the distance was always unchanged.  (By contrast, we will be extending these surgeries to the endpoints, $v$ and $w$.) \\

As an example, see Figure \ref{fig:sevenfold-surgery} (copied from \cite{BMM}.) On the left is a collection of curves, $v_2, v_3, \ldots, v_7$, labeled by each curve's index, that are part of some path from $v$ to $w$ in the curve graph. They are crossing some reference arc for the triple $v, v_1, w$.  The numbers at the top of the figure give an {\em intersection sequence} (defined below) for this collection with respect to this reference arc. On the right we see the result of a collection of simultaneous surgeries on these curves, along with a new sequence of numbers. These surgeries do not change the curves outside of the pictured area, so they preserve adjacency of the curves in the curve graph. That is, the only intersections after surgery occur between curves that already intersected. How did \cite{BMM} recognize when such collections of surgeries were possible, how did they know which surgeries to choose for each curve, and how did they know when no further surgeries were possible? The answer is in a visual representation, called a dot graph, of the intersections of the curves with the surgery arc. We now review that definition. 

\begin{figure}[ht]
  \centering
  \includegraphics[width=6in]{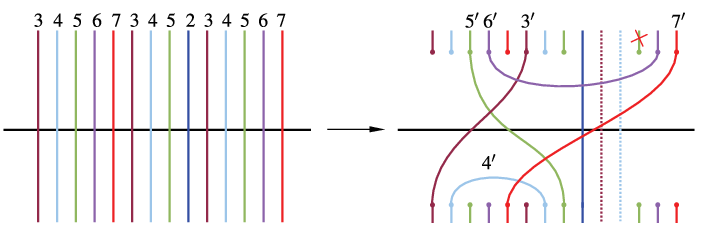}
  \caption{On the left is a collection of curves, $v_2, v_3, \ldots, v_7$ in some path, labeled by each curve's index, crossing some reference arc. The numbers at the top of the figure give an {\em intersection sequence} for this collection with respect to this reference arc. On the right we see a collection of simultaneous surgeries. These surgeries do not change the curves outside of the pictured area, so they preserve adjacency of the curves in the curve graph. That is, the only intersections after surgery occur between curves that already intersected.}
  \label{fig:sevenfold-surgery}
\end{figure}

\begin{definition} [Intersection sequence] Let $\mathcal G$ be an efficient path between $v$ and $w$, and $\alpha$ a reference arc for $v, v_1,$ and $w$. Following the construction in \cite{BMM}, we let $v_1, v_2, \ldots,v_{d-1}$ be vertices of $\mathcal G$, and let $N$ denote the cardinality of $\alpha \cap (v_1 \cup v_2 \cup \ldots \cup v_{d-2})$. Traversing $\alpha$ in the direction of some chosen orientation, we record the sequence of natural numbers $\sigma = ( j_1, j_2, \ldots , j_N) \in \{1, \ldots, d-2 \}^N$ such that the $i^{th}$ intersection point of $\alpha$ with $v_1 \cup v_2 \cup \ldots \cup v_{d-2}$ lies in $v_{j_i}$. Then $\sigma$ is called the {\bf intersection sequence}  of $\alpha$ of $\{v_i\}$.
\end{definition} 

In Figure \ref{fig:sevenfold-surgery}, for example, the intersection sequence for the pre-surgery configuration is written above the arcs of the curves $v_2, v_3, \ldots, v_7$ intersecting $\alpha$. 

It is helpful to simplify the intersection sequence so that we can  recognize more easily when surgery is possible. To that end, given an intersection sequence $\sigma$ of a reference arc $\alpha$, we  put $\sigma$ into a special form, {\em sawtooth form}, via repeated permutations of adjacent indices. \cite{BMM} showed that putting an intersection sequence into sawtooth form preserves the adjacency of curves in the path, as long as the permutations are of entries representing curves that already intersect (i.e., indices with difference greater than 1,) since permuting adjacent indices in the intersection sequence means moving one curve past the other on a surface. 

 \begin{definition} [Sawtooth form] A sequence of natural numbers is in sawtooth form if it satisfies the condition that $j_{i+1}> j_i \Rightarrow j_{i+1} = j_i + 1$ for each $j_i$. 
\end{definition}

 Any intersection sequence (in fact, any sequence of natural numbers,) can be put into sawtooth form by repeatedly permuting far-apart pairs of numbers that are adjacent in the sequence \cite{BMM}. Once it is in sawtooth form, an intersection sequence can be graphed in a standardized way, as a collection of consecutive dots connected by line segments of slope 1: 

\begin{definition} [Dot graph] A {\bf dot graph} is a visual representation of the intersection sequence of $\alpha$ obtained by graphing the intersection sequence as a function $\sigma: \{1,2,\ldots, d-2\}^N \rightarrow \mathbb{N} $. A point $(x_i,y_i)$ of the dot graph represents an intersection of $v_{y_i}$ and $\alpha$ occurring ${x_i}$-th along $\alpha$. See Figure \ref{fig:single_dot_graph}.
\end{definition}

\begin{figure}[ht]
  \centering
  \includegraphics[width=3in]{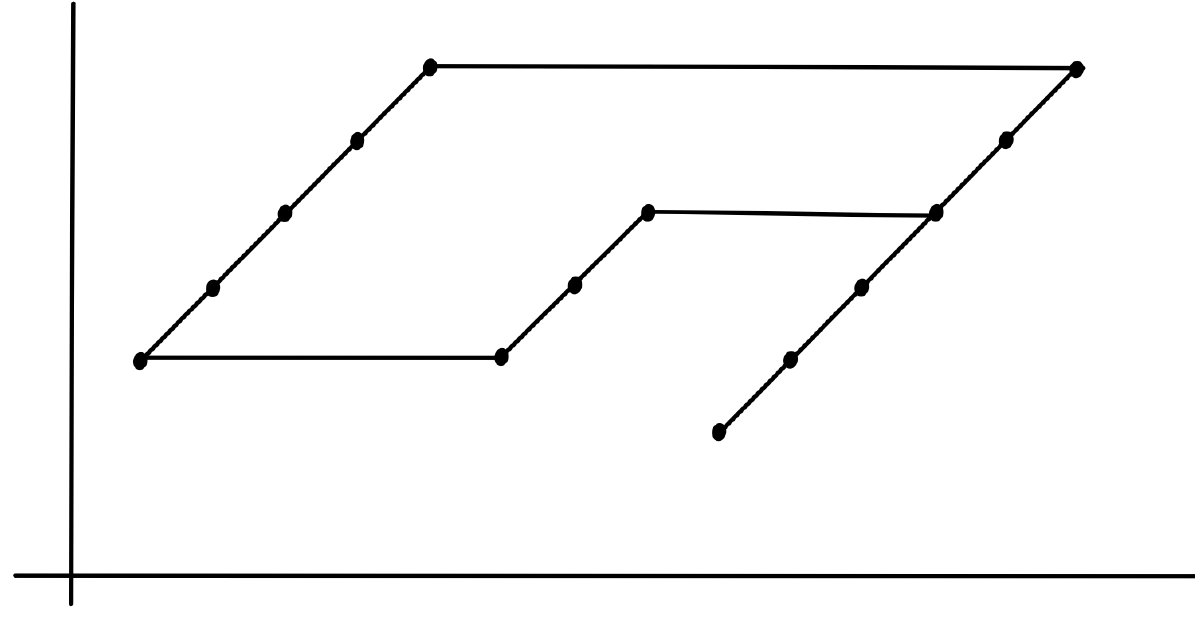}
  \caption{An example of a dot graph, with diagonal segments connected by horizontal segments to enclose a region. This dot graph represents the intersection sequence given in Figure \ref{fig:sevenfold-surgery}-left.}
  \label{fig:single_dot_graph}
\end{figure}

\begin{definition} [Empty, unpierced regions] In a dot graph, we can connect the diagonals with horizontal segments to enclose {\em regions}.  A region in a dot graph is said to be {\em pierced} if the interior of a horizontal edge of the region intersects the dot graph, and a region is said to be {\em empty} if there are no points of the dot graph in its interior. Shown in Figure \ref{fig:BMM-fig-9-regions} are what \cite{BMM} call a box, a hexagon of Type 1, and a hexagon of Type 2 (the possible hexagons without acute exterior angles.) We call these shapes regions of the {\em correct type} because, while there are other types of empty, unpierced regions, \cite{BMM} prove that only these three shapes admit surgeries on the underlying curves that result in disjoint curve pairs remaining disjoint. 
\end{definition}

\begin{figure}[ht]
  \centering
  \includegraphics[width=6in]{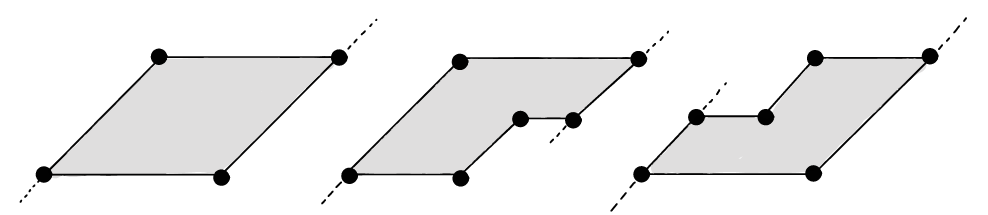}
  \caption{Examples of empty, unpierced regions in dot graphs: a box, a hexagon of Type 1, and a hexagon of Type 2. The dashed lines indicate that the dots could continue in those directions. It was shown in \cite{BMM} that only these types of regions admit adjacency-preserving surgeries on the associated curves.}
  \label{fig:BMM-fig-9-regions}
\end{figure}
The first result from \cite{BMM} that we extend in this paper explains how to use the regions of a dot graph to recognize when one can perform a set of simultaneous surgeries, as in Figure \ref{fig:sevenfold-surgery}, without causing disjoint curves to intersect. The statement in \cite{BMM} is about sequences of numbers and dot graphs representing them. We first restate that lemma in the context of the curves, and then, in the next section, extend it to our setting to include the endpoint $v$ of the path.   

\begin{lemma}[\cite{BMM} Lemma 3.4, rewritten in the language of curves] Let $\alpha$ be a reference arc for $v$, $v_1$, and $w$. Suppose that $\sigma$ is an intersection sequence for $\alpha$ that has a dot graph with an empty, unpierced box or an empty, unpierced hexagon without an acute exterior angle. Then there is a set of simultaneous surgeries one can perform on the curves crossing $\alpha$ such that the result of the surgeries is a path from $v$ to $w$ of the same length, with fewer intersections of the surgered intermediate curves with the reference arc.
  \label{lemma:bmm-reducible} 
\end{lemma}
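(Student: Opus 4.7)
The plan is to translate the combinatorial condition on the dot graph into an explicit surgery arc on $S$ and then verify that surgery along this arc reduces intersections while preserving the path structure. Recall that a dot at $(x_i, y_i)$ records an intersection of $v_{y_i}$ with the reference arc $\gamma^k$ at the $x_i$-th crossing along $\gamma^k$. An empty, unpierced box in the dot graph whose horizontal edges lie at heights $y=j$ and $y=j+1$ therefore picks out a sub-arc $\delta$ of $\gamma^k$ whose interior is disjoint from every intermediate curve $v_1, \ldots, v_{d-1}$ and whose endpoints lie on intersections with $v_j$ (respectively $v_{j+1}$). The hexagon case, under the no-acute-exterior-angle hypothesis, identifies an analogous sub-arc; whether the available region is a box or such a hexagon is precisely what pins down the $\pm$ versus $\mp$ surgery type realizable by an arc that stays inside the polygon of $\Dec{v,w}$ containing $\gamma^k$.

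First I would use $\delta$ as the surgery arc in the sense of Definition~\ref{definition:general-surgery}. Because $\delta$ runs inside $\gamma^k$ (which is parallel to the arc $w^k$) and has empty interior with respect to the intermediate curves, the surgery can be performed as a disc-swap in a thin strip along $\gamma^k$, producing a new simple closed curve $v_j'$ with exactly two fewer intersections with $\gamma^k$. I would then verify that $v_j'$ stays disjoint from both $v_{j-1}$ and $v_{j+1}$, which is all that is needed to keep $\G$ a path of the same length. Disjointness from $v_{j\pm 1}$ follows from the unpierced horizontal edges of the region, which prevent those curves from crossing $\delta$, together with emptiness of the interior, which prevents them from entering the surgery strip. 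The no-acute-exterior-angle hypothesis in the hexagon case serves exactly to rule out the one configuration in which the polygon geometry would force a neighboring curve back into the surgery region.

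The main obstacle I expect is bookkeeping the fact that several such surgeries may be required and must be performed simultaneously without interfering with one another, while also guaranteeing that no surgered curve becomes inessential. For the first point I would appeal to the sawtooth form of $\sigma$: it organizes the intersection sequence so that any collection of unpierced empty regions can be chosen to lie over disjoint sub-arcs of $\gamma^k$ and to act on distinct adjacent pairs in $\G$, and hence the surgeries commute. For the second, the filling hypothesis on $v$ and $w$ together with the fact that $\delta$ lies inside a single polygon of $\Dec{v,w}$ prevents the surgery from producing an inessential component, since any such loop would bound a disc in $S$ and could be isotoped away. The net effect is a new path from $v$ to $w$ of the same length in which the surgered curves meet $\gamma^k$ in strictly fewer points, as claimed.
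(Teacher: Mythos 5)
You should first note that the paper does not actually prove this statement: it is Lemma~3.4 of \cite{BMM} restated in the language of curves, and the text that follows it gives only a one-sentence indication of the idea (surger each curve represented by a pair of dots on the sides of the region, using a subarc of the reference arc as the surgery arc) together with a pointer to \cite{BMM} for the details. So you are reconstructing the \cite{BMM} argument, and your skeleton --- read the region off the dot graph as a family of sub-arcs of $\gamma^k$ and use them as surgery arcs in the sense of Definition~\ref{definition:general-surgery} --- is the right one.

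However, the step where you justify that the surgered curves remain disjoint from their neighbors in the path, which is the entire content of the lemma, does not work as written. You assert that emptiness of the region makes the sub-arc $\delta$ ``disjoint from every intermediate curve $v_1,\ldots,v_{d-1}$'' and that the unpierced horizontal edges ``prevent $v_{j\pm 1}$ from crossing $\delta$.'' Neither follows from the definitions given in the paper: a region is \emph{empty} when no dots lie in its interior, which constrains only curves whose indices lie strictly between the heights of the two horizontal edges, and it is \emph{unpierced} when no dots lie in the open horizontal edges, which constrains only the curves \emph{at} those two heights --- that is, the curves being surgered, not their neighbors $v_{j-1}$ and $v_{j+2}$. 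Relatedly, the surgery sub-arcs attached to the several curves bounding a single region are staggered translates of one another (the sides of the region are diagonals of slope~$1$), so they overlap rather than lying over disjoint sub-arcs of $\gamma^k$; the surgeries do not ``commute'' for the reason you give. The actual mechanism in \cite{BMM} is that \emph{all} curves appearing on the two diagonal sides of the region are surgered simultaneously, with the $+$/$-$ type of each surgery chosen consistently according to the directed graph of Figure~12 of \cite{BMM} --- exactly what this paper invokes again in the proof of Lemma~\ref{lemma:reducible} --- and the no-acute-exterior-angle hypothesis on hexagons is what guarantees that a consistent choice exists at the extra corner, not a statement about ``polygon geometry'' on the surface. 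Without this linkage of surgery types, your argument does not establish that the surgered collection is still a path of the same length.
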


The idea of this lemma was that, when we see empty, unpierced regions of the correct type in the dot graph, we can perform surgery on each of the curves $v_{y_i}$ that are represented by pairs of dots forming the sides of the region. It is not clear from the statement of the lemma how one chooses the type of surgery to perform, given an empty, unpierced region. That process is explained (and the freedom of choosing surgery type is revealed,) in the proof of the lemma, where \cite{BMM} defines and uses a directed graph to decide which surgeries to use (Figure \ref{fig:directed-graph-and-eg}-left.) Each vertex represents a surgery type, and the directed edge connects vertices if the surgery types, when performed on adjacent curves across the same surgery arc, result in non-intersecting curves. Starting from the bottom of the dot graph and working one's way up, one chooses any vertex of the directed graph for surgery on the bottommost pair of vertices in the dot graph, then follows any choice of the edges of the directed graph to assign the remaining surgeries to successive vertex pairs in the dot graph. Repeated applications of the lemma will reduce intersections until the path is efficient. For more details, see \cite{BMM}, pp1271-1273.  As always, an example is valuable. See Figure \ref{fig:directed-graph-and-eg}-right, which gives a graphical representation of Figure \ref{fig:sevenfold-surgery}.
\begin{figure}[ht]
  \centering
  \includegraphics[width=2.5in]{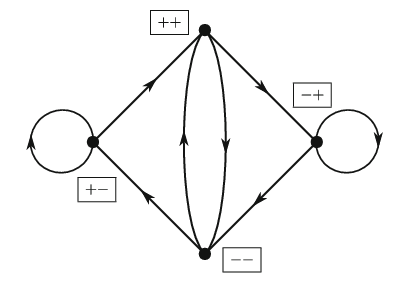}
  \hspace{.4in}
  \includegraphics[width=3in]{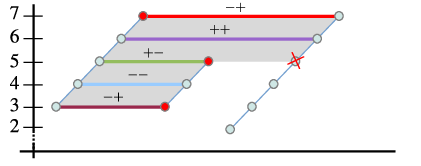}
  \caption{Left: The directed graph used to determine the surgeries in Lemma \ref{lemma:bmm-reducible}. Right: The dot graph with surgeries indicated that represent the surgeries in Figure \ref{fig:sevenfold-surgery}. Figures reproduced from \cite{BMM}. }
  \label{fig:directed-graph-and-eg}
\end{figure}

In the next section, we will extend these surgeries and redefine all of these tools to include the endpoints of a geodesic. 

 \section{Spiral surgery} \label{s:spiral surgery} 

In this section, we study a new geometric realization of the $+-$ and $-+$ surgeries, which we call {\it spiral surgery} and apply to the endpoints of a geodesic.  Recognizing patterns called \textit{spirals} in the decomposition $\Dec{v,w}$ provides a simple condition for surgery to preserve distance. 

 \subsection{A motivating example} \label{subsection:motivating eg}
 Figure \ref{fig:curlicue_example} depicts two pairs of curves, $(v, w)$ on the left and $(v', w)$ on the right. Both pairs are distance 3 in $\mathcal C(S_2)$.
 The curve pairs differ only by the green curve $v$ ``wrapping around" S and intersecting the red curve $w$ one more time than $v'$ does. 
 We have chosen an arbitrary orientation for our curves and labeled the arcs of the two curves cyclically between their intersection points, with the beginning of the labeling also chosen arbitrarily.

\begin{figure}[ht]
  \centering
\includegraphics[width=2.5in]{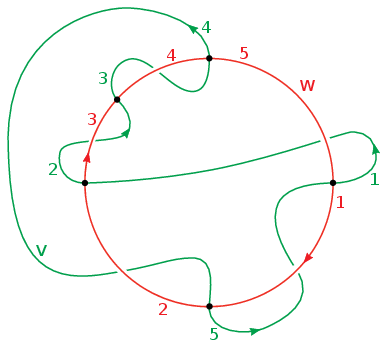}
\hspace{.5in}
\includegraphics[width=2.5in]{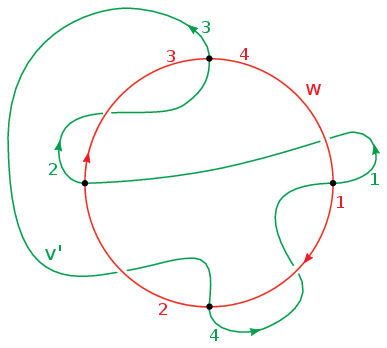}
\caption{Left: a simple curve pair $(v, w)$ with a spiral; 
  Right: the curve pair $(v', w)$ produced from a $-+$ surgery on $v$. The decomposition on the left consists of a single $12$-gon and two $4$-gons, on the right, a single $12$-gon and a single $4$-gon. In both cases, $d(v,w) = 3$. 
  }
  \label{fig:curlicue_example}
\end{figure}

In Figure \ref{fig:curlicue_example}-right, $(v', w)$ is the result of $-+$ surgery on $v$ with surgery arc $w^3$. (Note, $w^3$ is the arc of $w$ labeled $3$ in Figure \ref{fig:curlicue_example}-left.) Here we take an arc parallel to $w^3$ to be our surgery arc; it will turn out to also be a reference arc for $v$, $v_1$, $w$, where $v_1$ is any vertex adjacent to $v$ in a geodesic from $v$ to $w$.  
We cut open $v$ at the two consecutive intersection points of $v^2$  with $w$  and discard $v^2$, replacing it with a parallel copy of the surgery arc $w^3$. After relabeling so that arcs of $v$ are still consecutive, we have the curve pair on the right.  
 In this example, we remind the reader that the $-+$ surgery reduces the intersection number of a curve with its surgery arc by $1$.
Because the resulting pair still fills $S_2$ and has intersection number far below the minimum for distance $4$ on $S_2$, we know that $d(v', w)$ remains 3.  

The effect of this simple $-+$ surgery on the decomposition $\Dec{v,w}$ of the surface is the deletion of a single rectangle. The rectangle is bounded by $v^2$, $w^4$, $v^3$, and $w^3$. The surgery arc is thus parallel to one of the $w$-edges of the $4$-gon. Note that in our definition of reference arcs, we noted that for all $4$-gons and $6$-gons, all reference arcs for $v, v_1, w$ are parallel to arcs of $w$. This simplifies our considerations of surgeries on the endpoints.  

\begin{figure}[ht]
  \centering
  \includegraphics[width=6in]{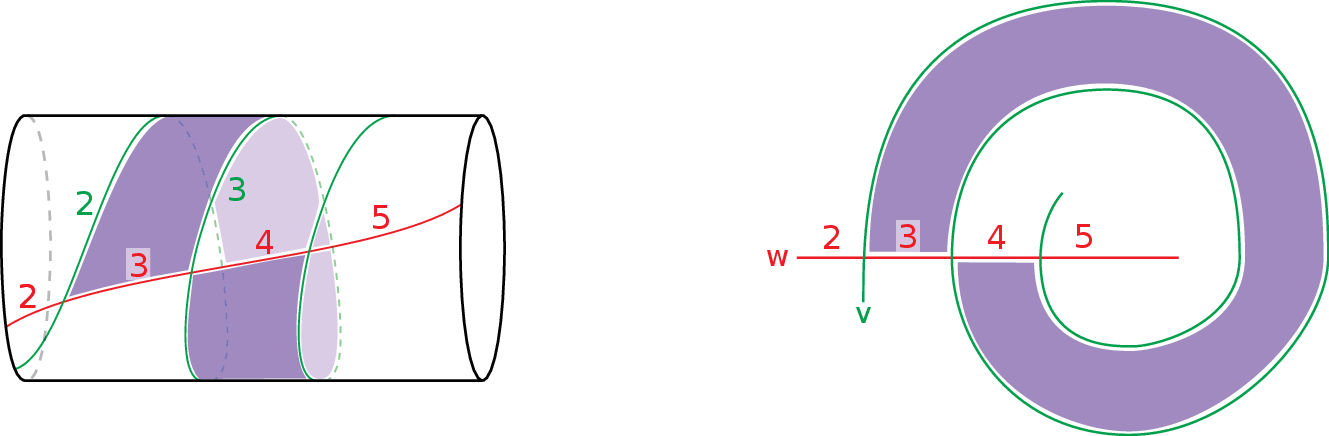}
  \caption{Recognizing a single rectangle as a spiral. Note that two corners of the rectangle are identified.}
  \label{fig:curlicue_example_with_4gon}
\end{figure}

Visualizing the arcs $v^2$, $v^3$ and $w^3$, $w^4$ on the surface, as shown in Figure \ref{fig:curlicue_example_with_4gon}-left, we see that the identified corners of the rectangle in the decomposition of $(v,w)$ means that the rectangle is wrapped around some part of the surface. When this wrapped rectangle is projected into the plane, as in Figure \ref{fig:curlicue_example_with_4gon}-left, a spiral is evident. In the next section, we define spirals to generalize and formalize this simple example of a single rectangle spiraling around the surface. 


\subsection{Spirals}\label{subsection:spirals}
{\bf Note:} In this section, we cut the closed surface open along $v$, producing a surface with two boundary components crossed by arcs of $w$. We will then be performing surgery along $v$ with surgery arcs taken to be reference arcs for $v, v_1$ and $w$. All of the arguments hold if, instead, we cut the surface open along $w$ and interchange the roles of $v$ and $w$ everywhere, and take reference arcs for $w, v_{d-1}, v$ to be the surgery arcs for $w$. In particular, our statement of Theorem  \ref{theorem:main} could be written from the perspective of surgery on $w$, studying the efficient geodesics from $w$ to $v$.  

We begin by generalizing what we observed about the rectangle in the example above.

\begin{definition}[Bands, length, width] 
  A {\bf band} of $4$-gons $\mathcal B_{v}$ in $S-v$  is a sequence of one or more $4$-gons in $\Dec{v,w}$ that are identified along $w$-edges, with its initial and final 4-gons attached along $w$-edges to $2k$-gons, $k > 2$.
  Note that the $v$ portion of $\partial \mathcal B_v$ consists of two `long' edges, each a subsequence of the cyclically ordered edges of $v$.
 The {\bf length} of a band is the number of 4-gons in $\mathcal B_v$.
  The  {\bf width} of $\mathcal B_v$ is $min\{m_v, \, i(v,w)-m_v\}$, where $m_v$ is the absolute value of the difference between the labels on the $v$-edges of any 4-gon in $\mathcal B_v$. (We define width this way to account for the cyclic ordering of the arcs of $v$.) 
  \end{definition}

The length and width of a band are independent of the choice of labeling and the choice of $4$-gon along which they are computed, because the gluing of the two boundary components of $S-v$ determines labelings up to cyclic permutation.

\begin{definition} [Spirals]
  A {\bf spiral} in $S-v$ is a band that has identified $v$-edges. The {\bf width} and {\bf length} of a spiral is the width and length of the band comprising it. See Figure \ref{fig:single_spiral_example}-left. 
  \end{definition} 
  
We note that the reference arcs across the rectangles forming a spiral are parallel to the ``spokes" formed by the  $w$-edges that cross the spiral. 
  
  \begin{definition}[Spiral interior]
  The {\bf interior} of a spiral is the set of rectangles that are bounded on all four sides by other rectangles in the spiral. The {\bf border} rectangles of a spiral are those that are not in the interior of the spiral.  
  \end{definition}
  
  In Figure \ref{fig:single_spiral_example}-right, the border rectangles are shaded yellow.   

\begin{figure}[!ht]
  \centering
  \includegraphics[width=2.5in]{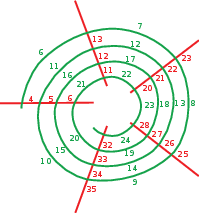}
  \hspace{1cm}
  \includegraphics[width=2.5in]{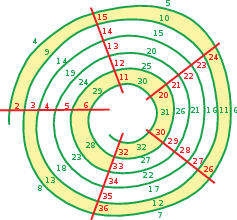}
  \caption{ Left: A spiral as a planar projection of a $4$-gon band wrapping around $S$. It has length $14$ and width $5$. Right: A spiral with the border rectangles shaded. It has length $24$ and width $5$.}
  
  \label{fig:single_spiral_example}
\end{figure}

Suppose we have a pair of vertices $v,w$ in $\mathcal C(S)$ with very large intersection number.
From Section \ref{ss:decomposition}, we know that the number of $2k$-gons, $k>2$, is finite in $\Dec{v,w}$. Thus, large intersection numbers will be manifested by a large number of $4$-gons in $\Dec{v,w}$, and as $F_4\to\infty$, we can expect to see spiraling. Conversely, surgery to reduce intersection number will be possible naturally along these bands. 

It is possible that multiple bands will form spirals together, as in Figure \ref{fig:k_spiral_example}. We call such configurations {\em $k$-spirals}, where $k$ is the number of bands traveling together in parallel. At this stage in the work, we have only defined spiral surgery for $1$-spirals, and the definitions and results do not apply to $k$-spirals, but we expect that $k$-spirals for $k> 1$ would be a more common occurrence in higher intersection number examples than the spirals of this paper. This would be a fruitful direction for further study, particularly for students. In fact, the reduction in intersection number of the Hempel example in Section 6 was obtained by repeated $+-$ and $-+$ surgeries along such $k$-spirals. 

\begin{figure}[!ht]
  \centering
  \includegraphics[width=3in]{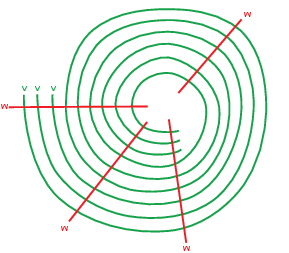}
\caption{ A $3$-spiral}
  
  \label{fig:k_spiral_example}
\end{figure}

\subsection{Spiral surgery}\label{subsection:spiral surgery}
As we studied many examples of distance 4 curve pairs on a genus 2 surface, we started to recognize a surgery that reduces $i(v,w)$: surgery on arcs of the curve in the interior of a spiral. Such surgery removes an arc of $v$ from the interior of the spiral while leaving the rest of the curve (and thus the rest of the decomposition,) fixed. We found this surgery appealing because the effects of this surgery are local, contained entirely within the spiral, so we could analyze the new curve more easily and hope to see the distance preserved. 

\begin{definition}[Spiral Surgery] 
  Let $\mathcal{B}$ be a spiral. {\em Spiral surgery} is a $(+-)$- or $(-+)$-surgery on $v$ with surgery arc parallel to a $w$-edge on an interior rectangle of the spiral.  \end{definition}
  \begin{figure}[!ht]
  \centering
  \includegraphics[width=2.5in]{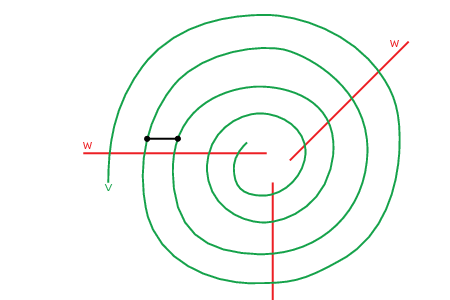}
  \hspace{1cm}
  \includegraphics[width=2.5in]{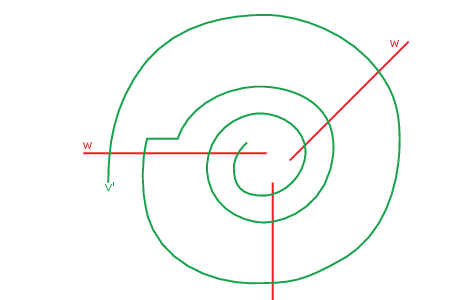}
  \caption{An example of spiral surgery. Left: a spiral in a curve pair $(v,w)$, in green and red respectively, and the surgery arc in black. Right: The resulting curve pair $(v', w)$ after  spiral surgery. The decomposition has three fewer $4$-gons. Outside of this figure, the curves, and thus the decomposition, are unchanged, so $i(v',w) = i(v,w) - 3$. }
   \label{fig:spiral surgery:before and after}
 \end{figure}
  An immediate question is why we have restricted our attention to $-+$ or $+-$ surgeries. In [BMM], there were 4 types of surgery performed on intermediate curves in a path: $++$, $- -$, $+-$ and $-+$. All of them were required in order to make a path efficient. 
In our work, we discovered that when we perform surgery on $v$ across rectangles in $\Dec{v,w}$, only $+-$ and $-+$ surgery preserve the distance between $v$ and $w$:  
\begin{proposition}\label{proposition:surgery restrictions} Let $v,w \in \mathcal C(S)$ be such that $d(v,w) \geq 3$.  Let $\gamma$ be a reference arc contained in a rectangle in $\Dec{v,w}$. Then type $++$ or $--$ surgery on either $v$ or $w$ relative to the surgery arc $\gamma$ will not preserve $\Dec{v,w}$. 
\end{proposition}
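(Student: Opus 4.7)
The plan is a local analysis inside the rectangle $R\subset\Dec{v,w}$ that contains $\gamma$: I will show that a $++$ or $--$ surgery creates a bigon between the surgered curve and the other curve inside $R$, so the new pair is not in minimal position and hence cannot inherit the decomposition $\Dec{v,w}$. I argue the case of surgery on $v$; the case of surgery on $w$ is identical after swapping the roles of $v$ and $w$ and replacing $\gamma$ by the analogous arc in $R$ joining the two $w$-edges.

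Fix an orientation on $v$ and label the corners of $R$ cyclically $A,B,C,D$, with $v$-edges $\lambda=DA$ and $\rho=BC$ and $w$-edges $AB,CD$. Since $\gamma$ is a reference arc in a rectangle, its endpoints $p,q$ lie in the interiors of $\lambda$ and $\rho$ and $\gamma$ is parallel to the $w$-edges. The first step is to match the BMM labels $(++,--,\pm,\mp)$ to the orientation configuration of $v$ on $\partial R$. At each of $p,q$ the curve $v$ has a local half-arc on each side of $\gamma$, and the two components of $v\setminus\{p,q\}$ are determined once $v$ is oriented. A direct check shows that the kept component of $v\setminus\{p,q\}$ meets $\gamma$ from the same side at both endpoints, i.e.\ realizes the $++$ or $--$ configuration of \Cref{fig:surgery}, precisely when the $v$-induced orientations on $\lambda$ and $\rho$ are both compatible (or both anti-compatible) with the boundary orientation of $R$; the opposite orientation configuration yields $\pm$ or $\mp$. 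Hence the hypothesis that a $++$ or $--$ surgery is being performed forces the orientation configuration in which the retained sub-arc of $v$, restricted to $R$, lies entirely in one of the two half-rectangles cut off by $\gamma$.

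The geometric crux is now immediate. In the $++$ case (the $--$ case is symmetric across $\gamma$), the surgered curve $v'$ restricted to $R$ is the concatenation of three arcs: the segment of $\lambda$ from $p$ to $A$, the arc $\gamma$ from $p$ to $q$, and the segment of $\rho$ from $B$ to $q$. Together with the $w$-edge $AB$ these three arcs bound the upper half of $R$. Since $p$ and $q$ lie in the interiors of $\lambda$ and $\rho$, they lie on $v'$ but not on $w$, so they are not corners of any polygon in $\Dec{v',w}$; the only $v'\cap w$ points on the boundary of the upper half-rectangle are $A$ and $B$. Thus the upper half of $R$ is a bigon of $(v',w)$: one arc of $v'$ (running $A\to p\to q\to B$) together with one arc of $w$ (the edge $AB$), meeting at the two intersection points $A$ and $B$.

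Finally I conclude. Because $(v',w)$ contains a bigon it fails to be in minimal position, so $\Dec{v',w}$ is not even defined until $v'$ is isotoped. The standard bigon-removal isotopy pushes $v'$ across the $w$-edge $AB$, cancelling the intersection points $A$ and $B$ and fusing $R$ with the polygon of $\Dec{v,w}$ on the other side of $AB$. That neighboring polygon need not be a rectangle, so the merger generally alters the non-rectangle portion $\mathcal F_g$ of the decomposition vector, not merely the rectangle count $F_4$. In particular the minimal-position decomposition that results is not obtained from $\Dec{v,w}$ by any clean deletion of rectangles, so $\Dec{v,w}$ is not preserved. The main care in the argument goes into verifying the orientation-to-label correspondence in the first step; once that is in hand, the bigon and its consequences follow by inspection.
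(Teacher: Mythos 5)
Your setup coincides with the paper's: both arguments translate the $++$/$--$ hypothesis into the statement that the two $v$-strands of the rectangle cross the reference arc incoherently (so that the retained component approaches $\gamma$ from the same side at both endpoints), and both observe that the surgered curve then cobounds a bigon with a $w$-edge of the rectangle. Your orientation bookkeeping and your identification of the bigon (the $v'$-arc $A\to p\to q\to B$ against the $w$-edge $AB$) are correct. The genuine gap is in your final paragraph. You say the bigon-removal isotopy fuses the bigon with the polygon across $AB$, that this polygon ``need not be a rectangle,'' and that the merger therefore ``generally alters'' $\mathcal F_g$ --- and then assert the proposition ``in particular.'' This skips exactly the case that needs work: when $R$ sits in the interior of a band of rectangles, the polygon across $AB$ \emph{is} another rectangle, the first bigon removal touches only rectangles, and nothing you have written rules out the outcome being an equivalent decomposition with fewer rectangles --- which is precisely the kind of change the allowed $\pm/\mp$ surgeries do make. ``Generally alters'' is not a proof of ``never preserves.''

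The paper closes this gap by propagating the bigon: after the first bigon is removed a new one appears against the next $w$-edge of the band, and $v'$ is pushed across the entire band until the last bigon abuts the $6$-gon $P$ at the band's end; removing that final bigon turns $P$ into a polygon with two fewer sides, so $F_6$ drops by one while no polygon with more sides is created. That violates equation (\ref{eq:Euler characteristic}), $4g-4 = F_6 + 2F_8 + \cdots$, so $(v',w)$ cannot even be a filling pair --- a definite, quantitative conclusion rather than a ``generic'' one. To repair your write-up, add this iteration along the band and replace the ``generally alters'' sentence with the Euler-characteristic count (or some other invariant computation) certifying that the resulting decomposition cannot satisfy the constraints of $\S$\ref{ss:decomposition}.
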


\begin{proof}
 Consulting \Cref{fig:surgery}, we see that, if we are performing $++$ or $--$ surgery across a rectangle, the two strands of the to-be-surgered curve $v$ in the leftmost sketch of \Cref{fig:surgery} must have opposite orientations and the result of surgery will be a choice of one of two simple closed curves.  
Without loss of generality, suppose that we performed a surgery of type $++$ across $\gamma$ in a rectangle $r$ that is part of a band of $\geq 1$ rectangles joining $w$-edges $6$-gons $P, Q \in \Dec{v,w}$, where it is possible that $P = Q$.
\Cref{fig:++ or --}-top depicts the setup.

 \begin{figure}[!ht]
   \centering
  \includegraphics[width=.25\textwidth]{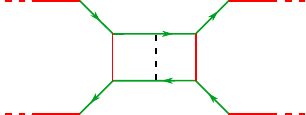}

  \vspace{.2in}

  \includegraphics[width=.25\textwidth]{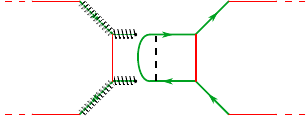}

  \vspace{.2in}

  \includegraphics[width=.1\textwidth]{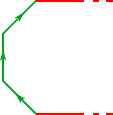}
  \caption{++ or $--$ surgery across a rectangle will not preserve $\Dec{v,w}$.}
   \label{fig:++ or --}
 \end{figure}
 
After the surgery, $v$ will be changed to two distinct closed curves, $v'$ and $v^{\prime\prime}$ and (without loss of generality) we choose one of them, say $v'$, so our new pair is $(v',w)$. We will show that the pair $(v',w)$ no longer fill a surface of genus $S$. \Cref{fig:++ or --}-middle shows $v'$ and focuses on the polygon $P$. 
The surgery creates a bigon in $(v',w)$ and, after pushing $v'$ across the bigon,  there will be a new bigon if there is another rectangle in the band, and the argument can be repeated. We continue to push $v'$ across the rectangles to eliminate bigons. 
The final bigon will be formed by $v'$ and a $w$ edge of $P$.
After removal of this final bigon the polygon $P$ will have changed to a polygon $P'$ with 2 fewer edges, as in \Cref{fig:++ or --}-bottom.  Thus $F_4$ has increased by 1 and $F_6$ has decreased by 1, while no other polygons have been created.
That means that $(v',w)$ no longer fill a surface with the given genus, by the equations in Section \ref{ss:decomposition}, so our surgery of type $++$ is not possible. The argument for $n$-gons greater than 6 and for $--$ surgery is identical. 
\end{proof}

See Figure \ref{fig:spiral surgery:before and after} for an example of a spiral surgery, where we have marked a reference arc at $9$ o'clock for our surgery arc. 
Here the surgery is $-+$ surgery. One piece of $v$ in the spiral that begins and ends on the endpoints of the surgery arc is removed, then replaced with a parallel copy of the surgery arc, leaving $v$ otherwise unchanged. Generally, the sign of a spiral surgery ($+-$ or $-+$,) is determined only by the direction the spiral winds on the surface, independent of orientation. 

Spiral surgery removes several $4$-gons from the spiral; the number of $4$-gons removed is equal to the width of the spiral. The rest of the decomposition is unchanged. Thus, spiral surgery reduces $i(v,w)$ by the width of the spiral. 

Looking back at Figure \ref{fig:directed-graph-and-eg}-left, We have learned from Proposition \ref{proposition:surgery restrictions} that, in spiral surgery, we must begin at the bottom of any extended dot graph with surgery of the form $-+$ or $+-$ and go from there. However, one cannot simply perform spiral surgery whenever one spots a spiral.

\begin{example} Let's examine an example of a curve pair $(v,w)$ which has a spiral but which does not admit a spiral surgery. In Figure \ref{fig:spiral-surgery-non-possible}-left, we see portions of a curve pair $v$ and $w$ of distance at least $5$, with intermediate curves of an efficient geodesic drawn in the spiral. Here we have some of the intermediate curves of the geodesic ``crossing over" the spiral rather than traveling in parallel along $v = v_0$ (labeling curves $v_k$ by their index $k$.) In Figure \ref{fig:spiral-surgery-non-possible}-right, we have the concatenated dot graphs of two adjacent intersection sequences on an interior loop of the spiral, using the consecutive edges of $w$ forming a spoke of the spiral as our reference arcs. These dot graphs appear along the upper right, lower right, and bottom arcs of $w$ across the spiral.   Examining the regions drawn, we recognize that no surgery is possible that will preserve the adjacency of vertices in the curve graph. This is because for a box region, the upper left vertex is required to be a terminal vertex in the ascending edge of the dot graph, which is not the case for any of these regions.

\begin{figure}[!ht]
  \centering
  \includegraphics[width=3in]{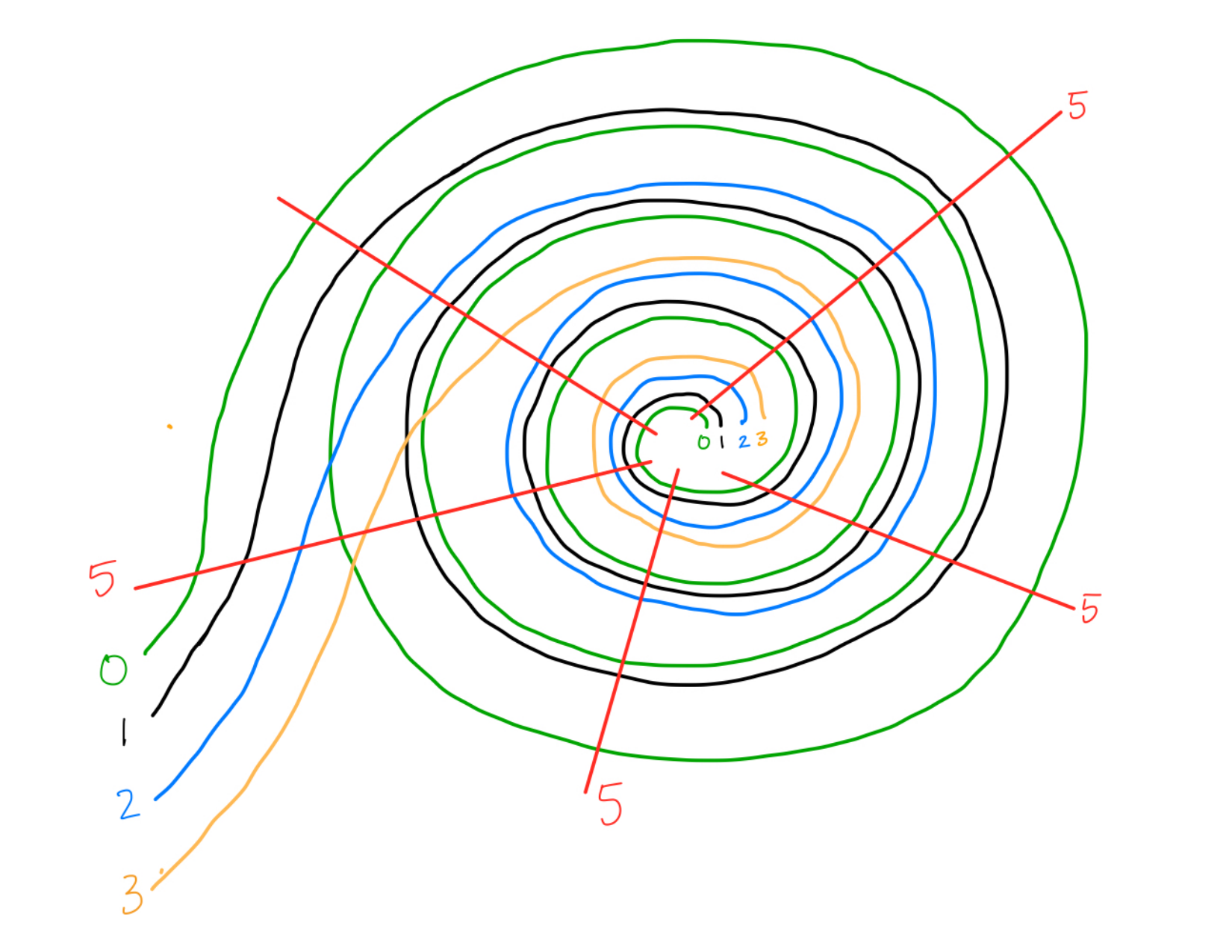}
  \hspace{1cm}
  \includegraphics[width=2.5in]{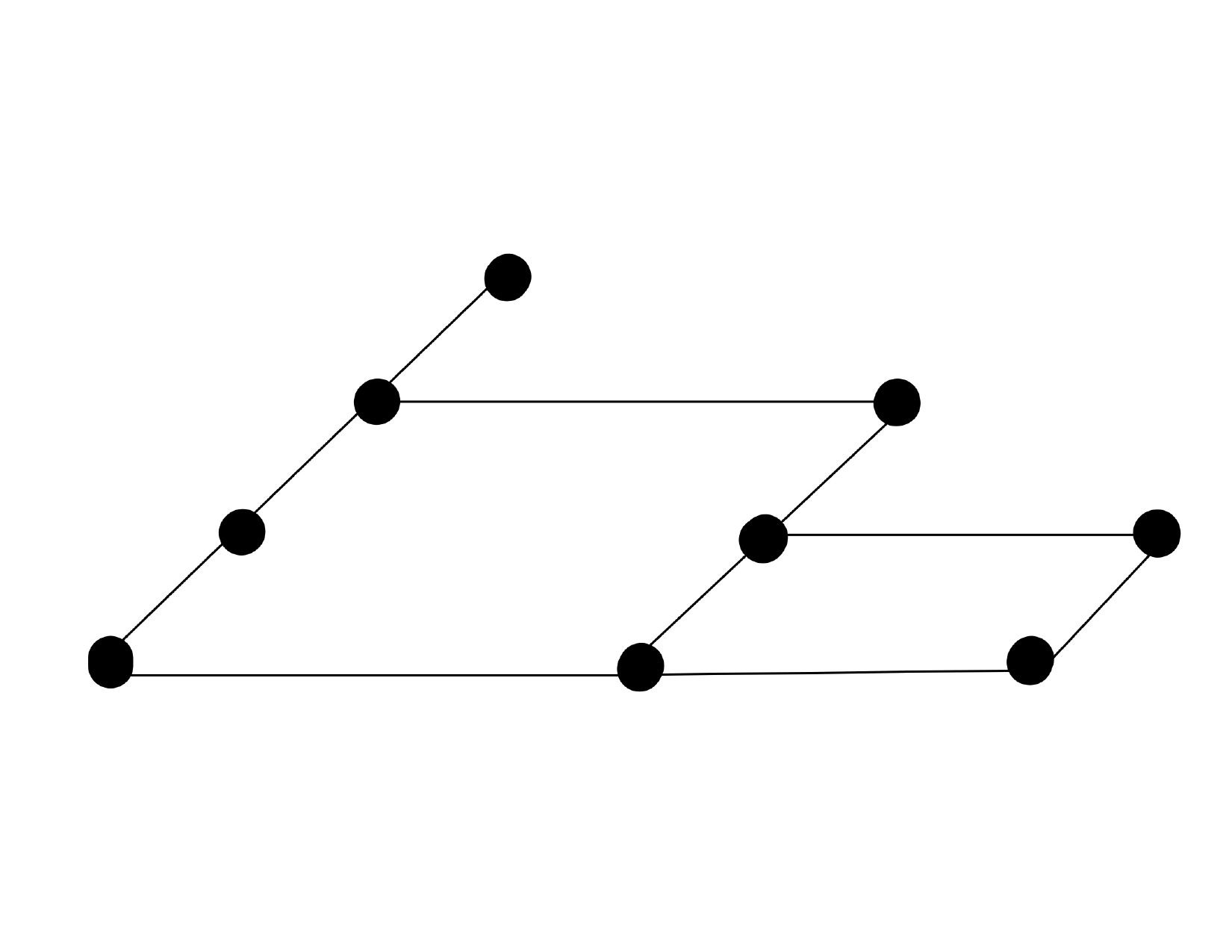}

  \caption{Example \ref{eg:spiral-surgery-non-possible}, a spiral in which spiral surgery is not possible. }
   \label{fig:spiral-surgery-non-possible}
 \end{figure}
 
 \label{eg:spiral-surgery-non-possible}
\end{example} 

This example reminds us that there is critical information in the dot graph, and suggests that in the case of surgery on $v$, we need to consider dot graphs that are concatenated to get the information we need. We explore this idea further in the next section. 
 
\section{When can we perform surgery on the endpoints of a geodesic? \label{s:reduce-i}}

In this section, we extend the concepts of intersection sequences, dot graphs, and regions of the correct type to incorporate the endpoints of a path. We define these tools in the most generality possible, but our application in this paper will be only to the situation of surgery across spirals. Our immediate goal is to find a way to include the endpoints of the reference arcs (that is, where $v$ crosses the reference arc,) in a dot graph. 

We begin by defining our construction for a single fixed efficient geodesic $ v=v_0, v_1,v_2, \ldots, v_d = w$ in $\mathcal C(S)$ (though the construction works generally for efficient paths.)  Then we will apply it to the finite set of all efficient geodesics from $v$ to $w$, $\{\mathcal G_1, \mathcal G_2, \ldots, \mathcal G_N\}$. We note that the decomposition $\Dec{v,w}$ and the set of reference arcs for $v,v_1,w$ will be the same for each geodesic $\mathcal G_j$. We will denote the reference arcs with superscripts, by $\gamma^i, \gamma^j, \gamma^k$, etc. 

\subsection{Extending dot graphs to include the endpoints of a geodesic \label{ss:extended-dot-graphs}}

\begin{definition}[Consecutive reference arcs] We say that two reference arcs (relative to $v$) are {\bf consecutive} if one endpoint of each arc lies on the same edge of $v$ while the reference arcs themselves lie on opposite sides of the edge. See Figure \ref{fig:consec-ref-arcs-defn} below. 
\end{definition}
\begin{figure}[ht]
  \centering
  \includegraphics[height=1.8in]{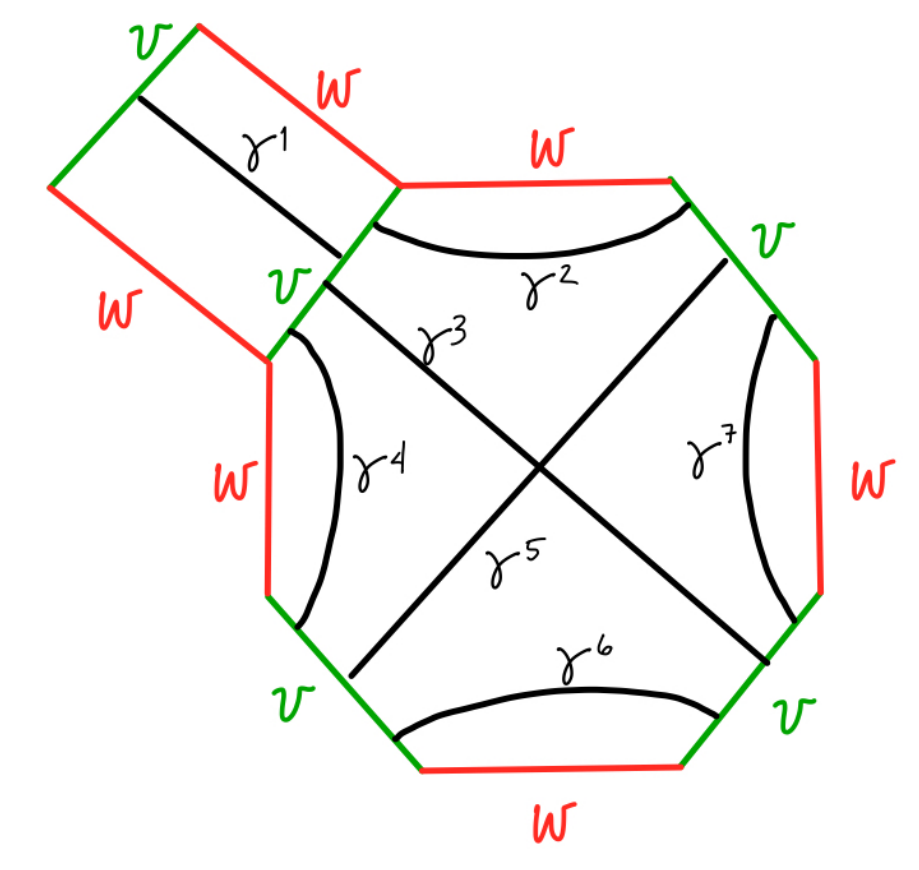}
    \hspace{2cm}
  \includegraphics[height=1.8in]{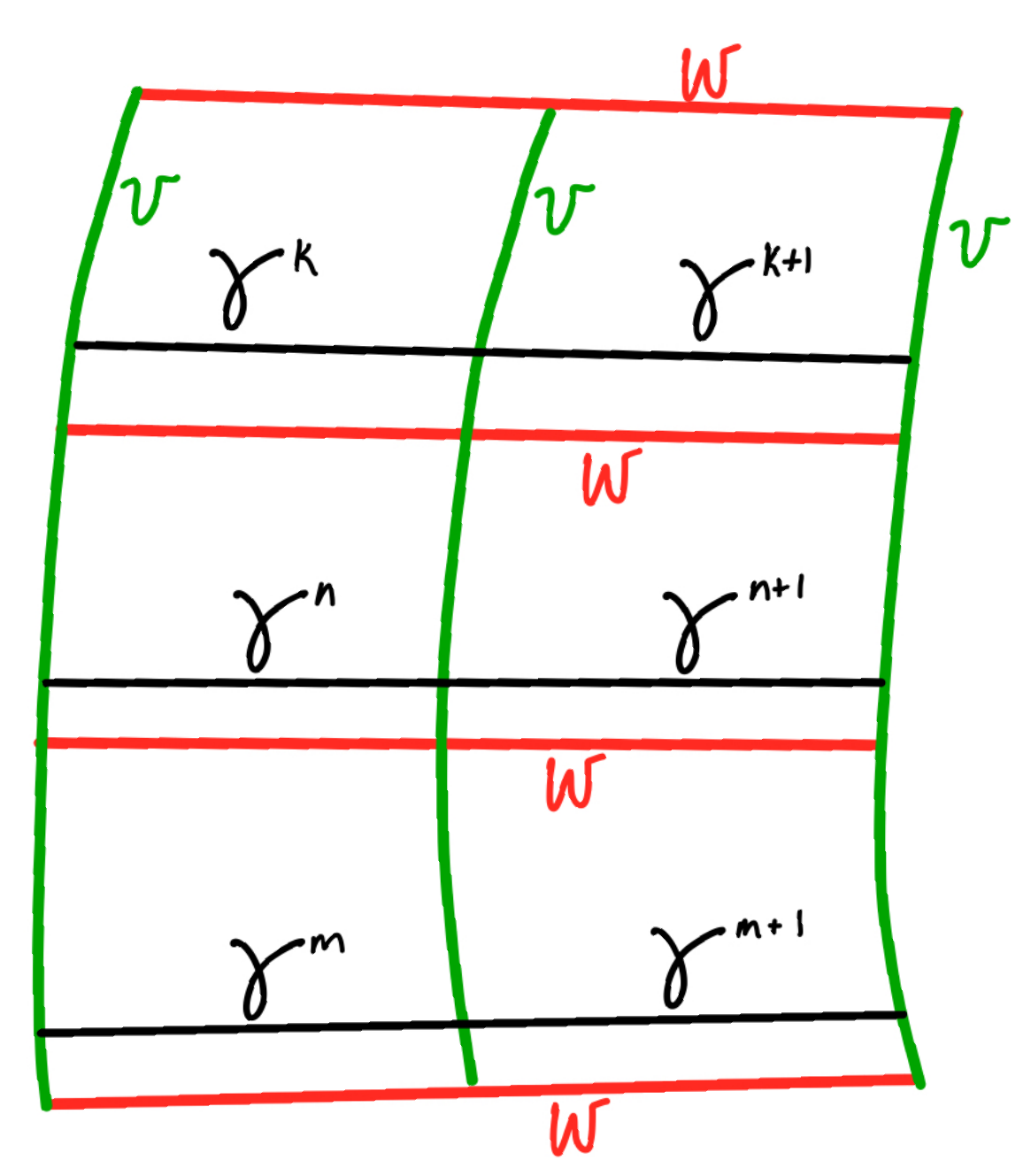}
    \caption{Left: The reference arcs $\gamma^1$ and $\gamma^2$ are consecutive, as are the pairs $\gamma^1, \gamma^3$ and $\gamma^1, \gamma^4$.  Right: consecutive reference arcs across $4$-gons that share edges as in a spiral.}
  \label{fig:consec-ref-arcs-defn}
\end{figure}

\begin{definition}[Extended intersection sequence] Fix a pair of consecutive reference arcs $\gamma^k, \gamma^m$ and let $sigma_k, \sigma_m$ be their intersections sequences with respect to some length $d$ path from $v=v_0$ to $w=v_d$.
 The {\em extended intersection sequence} $\sigma_{k,m}$  is the sequence of non-negative integers marking the order of intersections of $v_1, \ldots,v_{d-2}$ with $\gamma^k \cup \gamma^{m}$ with respect to some orientation. It is obtained by first adding a $0$ to the beginning of the intersection sequence $\sigma_k$ and the end of the intersection sequence $\sigma_{m}$, then joining these two sequences with a $0$ at the end of $\sigma_k$ and the beginning of $\sigma_{m}$. 
\end{definition}
 The appearances of $0$ in the new sequence correspond to the three intersections of $v$ with $\gamma^k \cup \gamma^{m}$. We note that in the initial definition of reference arcs, an arbitrary orientation is taken. When we concatenate the intersection sequences of two adjacent reference arcs, then, it may be the case that the chosen orientations are not the same, but we want to travel in the same direction along a concatenated pair. Thus, the extended intersection sequence could consist of any of the following (where $\overline{\sigma}$ means the reverse of sequence $\sigma$): 
 \begin{align}
\sigma_{k,m} & = (0,\sigma_k,0,\sigma_{m},0) \notag \\
         & = (0,\overline{\sigma_k},0,\sigma_{m},0) \notag \\
         & = (0,\sigma_k,0,\overline{\sigma_{m}},0) \notag \\
         & = (0,\overline{\sigma_k},0,\overline{\sigma_{m}},0)
\end{align}
We note that in the case of consecutive reference arcs across a spiral, we can take the reference arcs to follow the orientation of $w$ given by its labeling, so we are always in the first or last case.

Since the permutations to get a sequence into sawtooth form require only that the entries have indices differing by more than $1$, we can extend the process of putting a sequence into sawtooth form from sequences of natural numbers to sequences of non-negative integers. Once we have constructed an extended intersection sequence of consecutive arcs $\gamma^k$, $\gamma^{m}$, then, we put the extended sequence into sawtooth form by means of transposing curves that are not adjacent in the path, without changing the adjacency of any curves in the curve graph.  
We now can include the curve $v = v_0$ in the dot graph:  
 
 \begin{definition}[Extended dot graphs] Starting with the sawtooth-form extended intersection sequence $\sigma_{k,m}$,  we construct a new dot graph over the consecutive reference arcs $\gamma^k \cup \gamma^{m}$. We call the dot graph of the sawtooth-form extended intersection sequence the {\bf extended dot graph} of $\sigma_{k,m}$.
  
\end{definition}
We note that we don't simply concatenate the dot graphs of $\gamma^k$ and $\gamma^{m}$. Even if the orientations coincide, we first put the extended intersection sequence into sawtooth form, and then we create the extended dot graph. 

\begin{example} \Cref{fig:extended_dot_graph} shows an example; suppose the individual dot graphs for the sequences for a pair of consecutive reference arcs with different orientations are $\sigma_k = (1,2,3,2,3,4) $, $\sigma_{m} = (2,3,4,5,1,2,3,4)$, so the extended intersection sequence is $(0,1,2,3,2,3,4, 0, 2,3,4,5,1,2,3,4,0)$. After transforming into sawtooth form, the intersection sequence is $$\sigma_{k, m}=(0,1,2,3,2,3,4,2,3,4,5,0,1,2,3,4,0)$$ and we can construct the extended dot graph.  \end{example}

\begin{figure}[ht]
  \centering
  \includegraphics[height=1in]{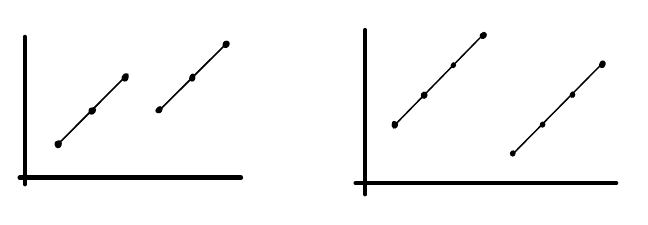}
  \hspace{.5in}
  \includegraphics[height=1in]{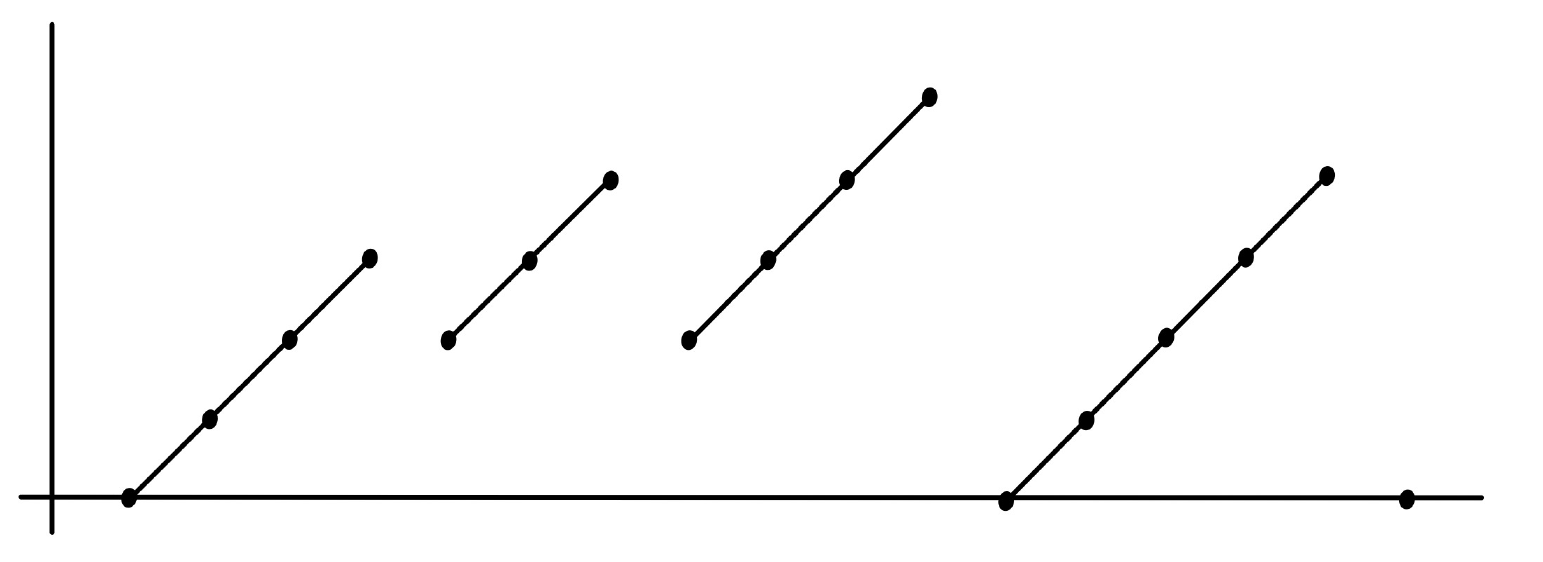}
  \caption{As described in Example 4.4: Left: Dot graphs of $\sigma_k$, $\sigma_{m}$. Right: Extended dot graph of $\sigma_{k,m}$ }
  \label{fig:extended_dot_graph}
\end{figure}

We connect vertices with slope-$1$ and slope-$0$ edges in the extended dot graph to enclose regions of the plane just as we did with dot graphs.
We define the empty and unpierced regions the same way in extended dot graphs as we did in the original dot graphs. We note that we recognize the pair $0,0$ in the extended dot graph of an extended intersection sequence as a trivial ``region", where we have permuted an entire sequence $\sigma_k$ or $\sigma_{m}$ past $0$.   

New empty, unpierced regions may appear in the extended dot graph, while there may be no empty, unpierced regions in the dot graphs lying above the individual $\gamma^i$ for any $i$. An example of this is seen in Figure \ref{fig:spiral-surgery-non-possible} in the previous section. In fact, we will assume that when we start, we have already performed all possible surgeries across any empty, unpierced regions of the correct type in any of the individual dot graphs above the $\gamma^k$.

Suppose a new empty, unpierced region of the correct type appears in the extended dot graph. If the lower corners of the regions are at height $0$, then the simultaneous surgeries (as described in \cite{BMM} and pictured in Figure \ref{fig:sevenfold-surgery})  to remove the region would involve the curve $v = v_0$. 
In this case, the next lemma extends Lemma \ref{lemma:bmm-reducible} so that we can perform surgery on $v$ and simultaneously on the intermediate curves appearing in the extended dot graph. We state it for paths, rather than for geodesics. 

\begin{lemma}[Reducing the intersection number of endpoints of an efficient path] \label{lemma:reducible}  For an efficient path $\mathcal{P} = (v, v_1, \cdots, v_{d-1}, w)$ from $v$ to $w$, suppose that there is a pair of consecutive reference arcs $\gamma^k$, $\gamma^{m}$ such that the extended dot graph of the extended intersection sequence $\sigma_{k,m}$ has an empty, unpierced box or hexagon of the correct type. 
 Let $v'_i$ be the result of surgery on $v_i$ along a surgery arc parallel to $\gamma^k$, $i=0,1,2, \ldots, d-1$ (if $v_i \cap \gamma^k = \emptyset$, then $v'_i = v_i$).
 Then surgery transforms the path $\mathcal{P}$ into a new path $\mathcal{P'}$ from $v'$ to $w$,  $\mathcal{P'} = (v', v_1', \cdots, v'_{d-1}, w)$, with $i(v',w) \leq  i(v,w)$ and the length of $\mathcal{P}$ the same as the length of $\mathcal{P'}$.

  \end{lemma}
  
\begin{proof} 
 Suppose we have an empty, unpierced region of one of the correct type. We assume that the bottom edge of the region is at height 0, since otherwise we are in the setting of Lemma \ref{lemma:bmm-reducible}.  We choose the surgery type appropriate to $v_0=v$, based on the configuration of $v$ relative to the surgery/reference arc $\gamma^{k,m}$. We then simultaneously perform surgery on each of the $v_{j_i}$, where $j_i \in \sigma_{k,m}$ appears as a pair of points in the boundary of the region, as determined by the directed graph in \Cref{fig:directed-graph-and-eg} . 
  
 All of the surgeries preserve or reduce intersection number; by construction, no new intersections are created. By following the directed graph \Cref{fig:directed-graph-and-eg} in choosing successive surgeries for the curves, we preserve disjointness of all of the surgered curves, including $v$ with $v_1$. 
Therefore $i(v',w) \leq i(v,w)$.
 Moreover, surgery preserves essentialness.
 We conclude that the resulting set of curves $v'_1, v'_2, \ldots, v'_{d-1}$ represent a path $\mathcal{P'}$ from $v'$ to $w$ in the curve graph of length $d$.
 \end{proof}

In Lemma \ref{lemma:reducible}, we didn't claim that if $\mathcal{P}$ were a geodesic between $v$ and $w$ then $\mathcal{P'}$ would be a geodesic from $v'$ to $w$. 
Here is why:  Lemma \ref{lemma:reducible} only explains what would happen in {\em one} efficient geodesic. 
That is, the extended dot graph only tells us where we might do surgery on $v$ and simultaneous surgeries on the $v_k$ that result in $i(v',w) < i(v,w)$ and a length $d$ path from $v'$ to $w$. 
However, changing the vertices $v,w$ to a new pair $v',w$ affects {\em every} efficient geodesic with endpoints $v, w$. For some other efficient geodesic $\mathcal G_r$, the extended dot graph above the same pair of consecutive reference arcs might have no empty, unpierced regions of the correct type. Performing surgery on $v$ along those reference arcs could produce a curve $v'$ that is a smaller distance from $w$. Therefore, showing that a surgery on $v$ preserves $d(v,w)$ requires us to show that (1) the surgery preserves the length of {\em every} efficient geodesic from $v$ to $w$ as a path from $v'$ to $w$ and (2) doesn't create any new, shorter paths between $v'$ and $w$ in $\mathcal C(S)$. 

We defined the extended dot graph for any efficient path in $\mathcal C(S)$, but to consider the extended dot graph for {\em every} efficient geodesic, we will need to use the finiteness of the number of efficient geodesics that is given by Theorem \ref{thm:bmm-main}.

\begin{definition}[Stacked extended dot graph] Since the same curves $v,w$ are the endpoints of each geodesic $\mathcal G_i$, with the same set of reference arcs $\gamma^k$, we can consider the extended dot graph of consecutive reference arcs $\gamma^k, \gamma^{m}$ for each of the $N$ efficient geodesics $\{\mathcal G_1, \mathcal G_2, \ldots, \mathcal G_N\}$ from $v$ to $w$. The {\bf stacked extended dot graph} of $\gamma^k, \gamma^{m}$ is the set of all $N$ extended dot graphs over $\gamma^k, \gamma^{m}$. We will denoted the set by \D$_{k,m}(v,w)$. 
\end{definition}

\begin{definition}[Regions in stacked extended dot graphs] Let \D$_{k,m}(v,w)$ be a stacked extended dot graph of $\gamma^k$ and $\gamma^{m}$.
 If there is an empty, unpierced region of the correct type (box or hexagon with no acute exterior angles,) in the extended dot graph of $\sigma_{k,m}$ for each $\mathcal G_i \in \{\mathcal G_1,\mathcal G_2, \ldots, \mathcal G_N\}$, then we say that there is an {\bf empty, unpierced region of the correct type} in \D$_{k,m}(v,w)$.

\end{definition}
 
 \begin{figure}[ht]
  \centering
  \includegraphics[width=2.5in]{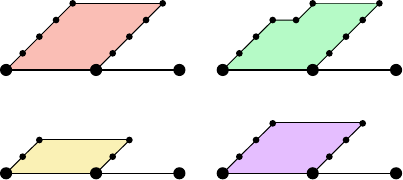}
  \hspace{.5in}
  \includegraphics[width=2.5in]{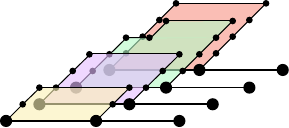}
  \caption{Left: extended dot graphs of $\sigma_{k,m}$ for four different paths from $v$ to $w$. Shaded in for each is an empty, unpierced region. Right:  \D$_{k,m}(v,w)$, the stacked extended dot graph for the four paths.  }
  \label{fig:stacked_extended_dot_graph}

\end{figure}

In \Cref{fig:stacked_extended_dot_graph}, we have sketched a simplified example of  \D$_{k,m}(v,w)$ for four different paths. The intuition here is that if one could ``see" through a hole all the way through a stacked extended dot graph of some $\sigma_{k,m}$, then one could perform a spiral surgery that preserves each of the efficient geodesics as paths. Indeed, with the guidance of the stacked extended dot graph \D$_{k,m}(v,w)$, we can perform simultaneous surgeries on all of the efficient geodesics between $v$ and $w$ as indicated by the presence of an empty, unpierced region of the correct type above the surgery arc $\gamma^k$. Iterating Lemma \ref{lemma:reducible}, we see that the result will be a set of efficient paths from $v'$ to $w$ of the same length as the distance from $v$ to $w$. This discussion yields the following: 

\begin{proposition}
Suppose that $v$ and $w$ are filling curves of distance $d$ on a surface $S$, with $N$ efficient geodesics from $v$ to $w$, and suppose that \D$_{k,m}(v,w)$ has an empty, unpierced region of the correct type above the reference arc $\gamma^k$. Then the set of $N$ efficient geodesics is sent to a set of efficient paths connecting $v'$ to $w$ by means of simultaneous surgery, where $v'$ is the result of performing surgery on $v$. \qed
\end{proposition}

\subsection{Applying stacked, extended dot graphs to spiral surgery \label{ss:main-theorem}}

In this section, we apply the general machinery of extended dot graphs to spirals. The key observation is that, in the interior of a spiral, adjacent $v=v_0$ vertices that are adjacent in the dot graph are actually endpoints of a single loop of the spiral. This constrains the extended dot graphs that can appear over reference arcs of a spiral. 
\begin{figure}[ht]
  \centering
  \includegraphics[height=1in]{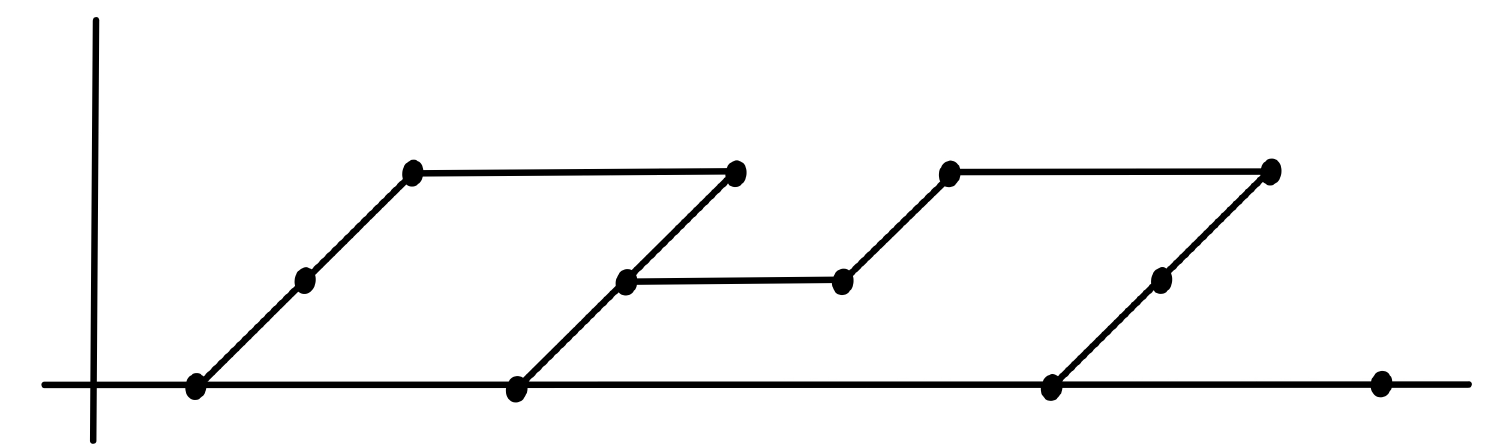}
 
  \caption{Adjacent extended dot graphs that could not appear in a spiral.}
  \label{fig:box-hex-dot-graph}
\end{figure}
For example, consider the adjacent extended dot graphs in the Figure \ref{fig:box-hex-dot-graph}. Both of these extended dot graphs are empty, unpierced, and (unlike those in Example  \ref{eg:spiral-surgery-non-possible},) of the correct type. However, if they appeared on the interior of a spiral, the configuration of curves $v=v_0, v_1, v_2$ that they represent could not be disjoint. For the intersection sequence represented by Figure \ref{fig:box-hex-dot-graph} is given by $$0,1,2,0,1,2,1,2,0,1,2, 0,$$ and the interior piece of the spiral this would describe is shown in Figure \ref{fig:impossible-spiral}.

\begin{figure}[ht]
  \centering
    \includegraphics[height=2in]{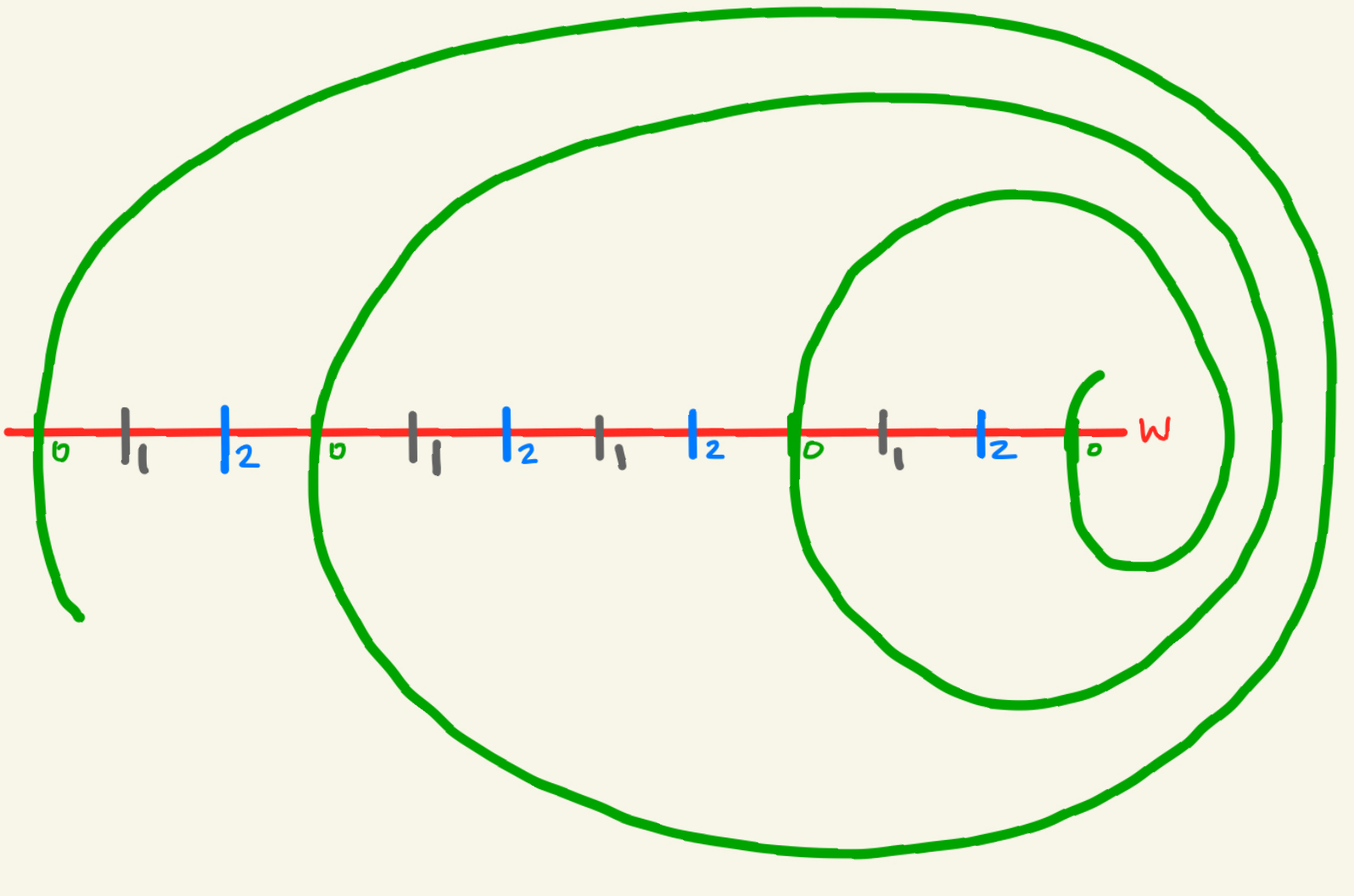}
  \caption{A spiral that has the extended dot graphs given by   Figure \ref{fig:box-hex-dot-graph}. The arc of $v$ in the spiral is drawn in its entirety, while only the intersections of $v_1$ and $v_2$ with $w$ are indicated. One sees immediately that there is no configuration of $v,$, $v_1$< and $v_2$ that has these intersections with $w$ that will keep curves $v$ and $v_1$ disjoint, and $v_1$ and $v_2$ disjoint.}
  \label{fig:impossible-spiral}
\end{figure}

This example, as well as Example \ref{eg:spiral-surgery-non-possible}, leads us to the following lemma: 

\begin{lemma} Let $v$ and $w$ be a filling pair on $S$ such that the curves form a spiral on $S$. If the interior of the spiral admits a spiral surgery, that is, its extended dot graphs have empty, unpierced regions of the correct type, then the spiral has identical extended dot graphs across its interior. 
\label{lemma:spiral-dot-graph}
\end{lemma}

\begin{proof}

The proof follows case by case, taking the possible pairings of the three types of regions that on their own might admit surgeries: box + hexagon of Type I, box + hexagon of Type II, and hexagon of Type I + hexagon of Type II, and the same pairings in opposite order. By examining the spiral with the configuration of curves represented by the extended intersection sequence, as in Figure \ref{fig:impossible-spiral}, one immediately sees that the only realizable curve configurations  occur when the dot graphs on the interior are identical.  
\end{proof}

This paper's main result is that, under the conditions outlined in this section, spiral surgery preserves distance while reducing intersection number. 

\begin{theorem}[Spiral surgery preserves efficiency and distance]
\label{theorem:main}
 Let $v,w$ be vertices in $\mathcal C(S)$, $S$ closed and genus $g$, with distance $d=d(v,w) \geq 3$. 
Suppose that $\Dec{v,w}$ has a spiral of width $m$, where $m < i(v,w) -  i_{min}(d, g, \mathcal{F})$, and suppose that there is an empty, unpierced region of the correct type in \D$_{k,k+1}(v,w)$ over the interior of the spiral. Let $v'$ be the result of the indicated spiral surgery on $v$.
 
Let $\{\mathcal G_1, \mathcal G_2, \ldots, \mathcal G_n\}$ be the set of all efficient geodesics connecting $v$ and $w$ in $\mathcal C(S)$ and let $\{\mathcal G'_1, \mathcal G'_2, \ldots, \mathcal G'_n\}$ be the set of paths resulting from the simultaneous surgeries on each $\mathcal G_j$ as indicated by each extended dot graph. Then $i(v',w) < i(v,w)$ and each $\mathcal G'_j$ is an efficient path of length $d(v,w)$ from $v'$ to $w$. Furthermore, $d(v', w) = d(v,w).$
\end{theorem}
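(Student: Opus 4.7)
The plan is to establish the three assertions of the theorem in sequence, using the structural information provided by the spiral together with the finiteness of efficient geodesics (Theorem \ref{thm:bmm-main}).

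First, to verify $i(v',w) < i(v,w)$ directly: a spiral of width $m$ is a $v$-band of $m$ rectangles whose $v$-edges are identified, and the arc of $v$ that spiral surgery removes crosses $w$ exactly $m$ times, once per rectangle in the spiral. The replacement push-off of a single $w$-edge in the spiral crosses $w$ in no new points, so $i(v',w) = i(v,w) - m < i(v,w)$.

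For the length claim on each $\G'_j$: by the definition of an empty unpierced region in \D$_k(v,w)$, such a region appears in the extended dot graph over $\gamma^k \cup \gamma^{k+1}$ for every efficient geodesic $\G_j$. Applying Lemma \ref{lemma:reducible} to each $\G_j$ with the same surgery data produces a path $\G'_j$ from $v'$ to $w$ of length $d$. Consistency of the induced simultaneous surgery across all reference arcs in the spiral interior is guaranteed by Lemma \ref{lemma:spiral-dot-graph}, which says the interior dot graphs all agree up to reversal, so a single spiral surgery realizes the same $\pm$- or $\mp$-type cut on every intermediate curve that enters the spiral. To confirm efficiency of each $\G'_j$, I would compare $\Dec{v',w}$ to $\Dec{v,w}$: the only change is the deletion of the $m$ rectangles of the spiral. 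For reference arcs outside the spiral, the dot graphs of $\G'_j$ coincide with those of $\G_j$ (which are efficient), because the intermediate-curve surgery is supported inside the spiral. For surviving reference arcs inside the spiral, Lemma \ref{lemma:spiral-dot-graph} together with Proposition \ref{proposition:surgery restrictions} forces each surgered intermediate curve to inherit the same $\pm$- or $\mp$-cut as $v$, so the new dot graphs are obtained by deleting the corresponding entries and still satisfy the bound of at most $d-1$ intersections per reference arc; the initial-efficiency condition for each oriented sub-geodesic is verified identically.

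The main obstacle is proving $d(v',w) = d(v,w)$. We have $d(v',w) \leq d$ from the paths $\G'_j$. Because spiral surgery replaces an arc of $v$ by a parallel push-off of a $w$-edge contained in the spiral, $v'$ admits a representative disjoint from $v$; combined with $i(v',w) < i(v,w)$ (so $v \neq v'$ as isotopy classes) this gives $d(v,v') = 1$, and the triangle inequality yields $d(v',w) \geq d - 1$. Ruling out the case $d(v',w) = d - 1$ is the delicate step. I would assume for contradiction a geodesic $v' \to u_1 \to \cdots \to u_{d-2} \to w$ of length $d-1$, prepend the edge $v \to v'$ to obtain a length-$d$ path from $v$ to $w$, and refine it to an efficient geodesic from $v$ to $w$ via Theorem \ref{thm:bmm-main}. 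This efficient geodesic must belong to the enumerated set $\{\G_1,\ldots,\G_N\}$, and I would then argue that its extended dot graph over $\gamma^k \cup \gamma^{k+1}$ is incompatible with the empty unpierced region being present in \D$_k(v,w)$: the surgery prescribed by that region, traced through this particular geodesic, would have to strictly shorten it rather than preserve its length, contradicting the length-preservation of Lemma \ref{lemma:reducible}. The hard part is making this incompatibility quantitative; it will likely require a careful analysis of how $v'$ sits as the first vertex of any such would-be shorter geodesic, using the fact that $v'$ and $v$ differ only within the spiral and that the intermediate-surgery pattern is rigidly dictated across the entire band.
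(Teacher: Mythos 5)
Your handling of the first two claims tracks the paper's proof closely: $i(v',w)<i(v,w)$ by counting the $m$ deleted crossings, length preservation by repeated application of Lemma~\ref{lemma:reducible}, and efficiency by comparing reference arcs before and after surgery. (The paper phrases the efficiency step as showing that each post-surgery reference arc $\gamma'$ is built from exactly one pre-surgery reference arc --- otherwise repeats in the intersection sequence would expose an unperformed surgery; your version, via deletion of the corresponding entries from the interior dot graphs using Lemma~\ref{lemma:spiral-dot-graph} and Proposition~\ref{proposition:surgery restrictions}, is essentially equivalent.)

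The genuine gap is in the final and central claim, $d(v',w)=d(v,w)$. You correctly reduce to excluding $d(v',w)=d-1$ via $d(v,v')=1$ and the triangle inequality, but the strategy you then propose --- prepend $v\to v'$ to a hypothetical length-$(d-1)$ geodesic, pass to an efficient geodesic from $v$ to $w$ by Theorem~\ref{thm:bmm-main}, and contradict the presence of the empty unpierced region --- does not close. Every efficient geodesic from $v$ to $w$ already lies in the enumerated set $\{\G_1,\dots,\G_N\}$, and by hypothesis \emph{all} of them carry the empty unpierced region, so producing yet another member of that set yields no contradiction; moreover, the efficient geodesic supplied by Theorem~\ref{thm:bmm-main} need not pass through $v'$ or retain any memory of the prepended edge, so the length-preservation of Lemma~\ref{lemma:reducible} is never violated. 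You acknowledge this yourself (``the hard part is making this incompatibility quantitative''), and that hard part is precisely the content of the theorem. The paper closes the argument by a different, more local route: supposing $d(v',w)=d'<d$, it takes the \emph{first} vertex $x$ of a shorter geodesic, so that $d(v',x)=1$ while $d(v,x)>1$ (otherwise $v\to x\to\cdots\to w$ would beat $d$). Since $v$ and $v'$ differ only inside the spiral, a representative $\chi$ of $x$ disjoint from $v'$ can intersect $v$ only along the interior of the spiral; Lemma~\ref{lemma:spiral-dot-graph} forbids such a curve, since it would enter and exit the spiral interior and alter the (necessarily identical) dot graphs there. That is the missing idea your proposal needs.
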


\begin{proof}
{\bf Spiral surgery preserves efficiency:}
  Each $\mathcal G'_j$ is a path of length $d$, due to repeated applications of Lemma \ref{lemma:reducible}. 

  We recall that a path is efficient if the oriented sub-geodesic $v_k,\dots,v_d=w$ is initially efficient for each $0\leq k\leq d-3$ and the oriented path $v_d, v_{d-1}, v_{d-2}, v_{d-3}$ is also initially efficient.

We assume that we have performed all possible surgeries to remove any empty, unpierced regions of the correct type in the stacked extended dot graphs.

 We begin by checking that the first part of the efficient definition holds true for an arbitrary choice of $\mathcal G'_j = v', v'_1, \ldots, v'_{d-2}, v'_{d-1}, w$, the image of some efficient geodesic $\mathcal G_j = v, v_1, \ldots, v_{d-2}, v_{d-1}, w$ under spiral surgery.
  Fix $k$.
We know that the original subgeodesic $v_k, v_{k+1}, \ldots, v_{d-1}, w$ was initially efficient, so that $|v_{k+1} \cap \alpha | \leq  d-k-1$, for any reference arc $\alpha$ relative to $v_k, v_{k+1}, w$. Let $|v_{k+1} \cap \alpha | = m \leq  d-k-1$.

 If $v'_k = v_k$ and $v'_{k+1} = v_{k+1}$, then there is nothing to show, so we assume that $v'_k \neq v_k$ and $v'_{k+1} \neq v_{k+1}$. This means that $v_k$ and $v_{k+1}$ were surgered as a result of the spiral surgery. Since the location of the surgery on $v$ was contained entirely within the spiral, that means that $v_k$ and $v_{k+1}$ passed through the interior of the spiral formed by $v$ and $w$. 
By Lemma \ref{lemma:spiral-dot-graph}, we know that if $v_k$ appears in the interior of a spiral where we are doing spiral surgery, then it will form a spiral with $w$ there as well. We then can apply Proposition \ref{proposition:surgery restrictions} to the subgeodesic $v_k, v_{k+1}, \ldots, v_{d-1}, w$ to recognize that the only possible surgery on $v_k$ and $v_{k+1}$ would have been $+-$ or $-+$ surgery, matching the surgery done on $v$. 
(This observation marks an important simplifying aspect of spiral surgery: a complicated set of surgeries like that seen in Figure \ref{fig:sevenfold-surgery} will not appear in a spiral surgery. In fact, Proposition \ref{proposition:surgery restrictions} and Lemma \ref{lemma:spiral-dot-graph} tell us that the only part of the graph we would use from Figure \ref{fig:directed-graph-and-eg}-left to determine which surgeries are possible would be the loop connecting vertex $+-$ to itself and the loop connecting vertex $- +$ to itself.)

Within the interior of the spiral, spiral surgery reduces pairs of intersections of $v_{k+1}$ with some reference arc $\alpha$ for $v_k, v_{k+1}, w$ to single intersections of $v'_{k+1}$ with some reference arc $\alpha'$ for $v'_k, v'_{k+1}, w$. For the spiral surgery to have been possible on $v_{k+1}$, then, there were an even number of intersections of $v_{k+1}$ with $\alpha$ on the interior of the spiral, with the surgery then resulting in half the number of intersections. The new reference arc $\alpha'$ is either the same as some reference arc $\alpha$ for $v_k, v_{k+1}, w$ or it is a new one. If it is new, because we are working on the interior of a spiral we know that it consists of a concatenated pair of reference arcs across the spiral given by $v_k$ and $w$. Thus, since the subgeodesic from $v_k$ to $w$ was initially efficient, we can bound $|v'_{k+1} \cap \alpha' |$  by twice the bound on $|v_{k+1} \cap \alpha |$, but we also know that spiral surgery reduced these intersections by half along each of the two reference arcs. We conclude that $|v'_{k+1} \cap \alpha' | \leq  2m/2 = m \leq d-k-1$. 

Finally, we consider whether the oriented path $w= v_d, v'_{d-1}, v'_{d-2}, v'_{d-3}$ that results from the surgeries is also initially efficient.
Since the surgery arcs were all parallel to arcs of $w$ (since in a spiral, we only have $4$-gons,) and since $w$ and $v_{d-1}$ are disjoint, $v'_{d-1} = v_{d-1}$. 
If $v'_{d-3} \neq v_{d-3}$, and then we analyze the intersections of $v'_{d-1}$ with some reference arc $\alpha'$ for $w, v_{d-1}, v'_{d-3}$  in the same way as above to conclude that $|v_{d-1} \cap \alpha'|$ is bounded as $|v_{d-1} \cap \alpha'|$ was, and similarly for $v'_{d-2}$, if $v'_{d-2} \neq v_{d-2}$. We conclude that the new path after surgery is efficient, since the original path $v_d, v_{d-1}, v_{d-2}, v_{d-3}$ is efficient.

{\bf Spiral surgery preserves distance: }  Proving that $d(v',w) = d(v,w)$ requires that we show that there is no efficient path from $v'$ to $w$ with length shorter than $d(v,w)$. For the sake of contradiction, let us suppose that there is such a path, so $d(v',w) = d' < d(v,w)$ and let's examine the first of the intermediate curves in the path. That is, we choose $x \in \mathcal{C}(S)$ such that $d(v', x)=1$ and $d(x, w) =d'-1$. 
Such an $x$ has $d(v,x)>1$, for otherwise there is a path from $v$ to $w$ through $x$ that has length less than $d(v,w)$. 
\begin{figure}[ht]
  \centering
    \includegraphics[height=2in]{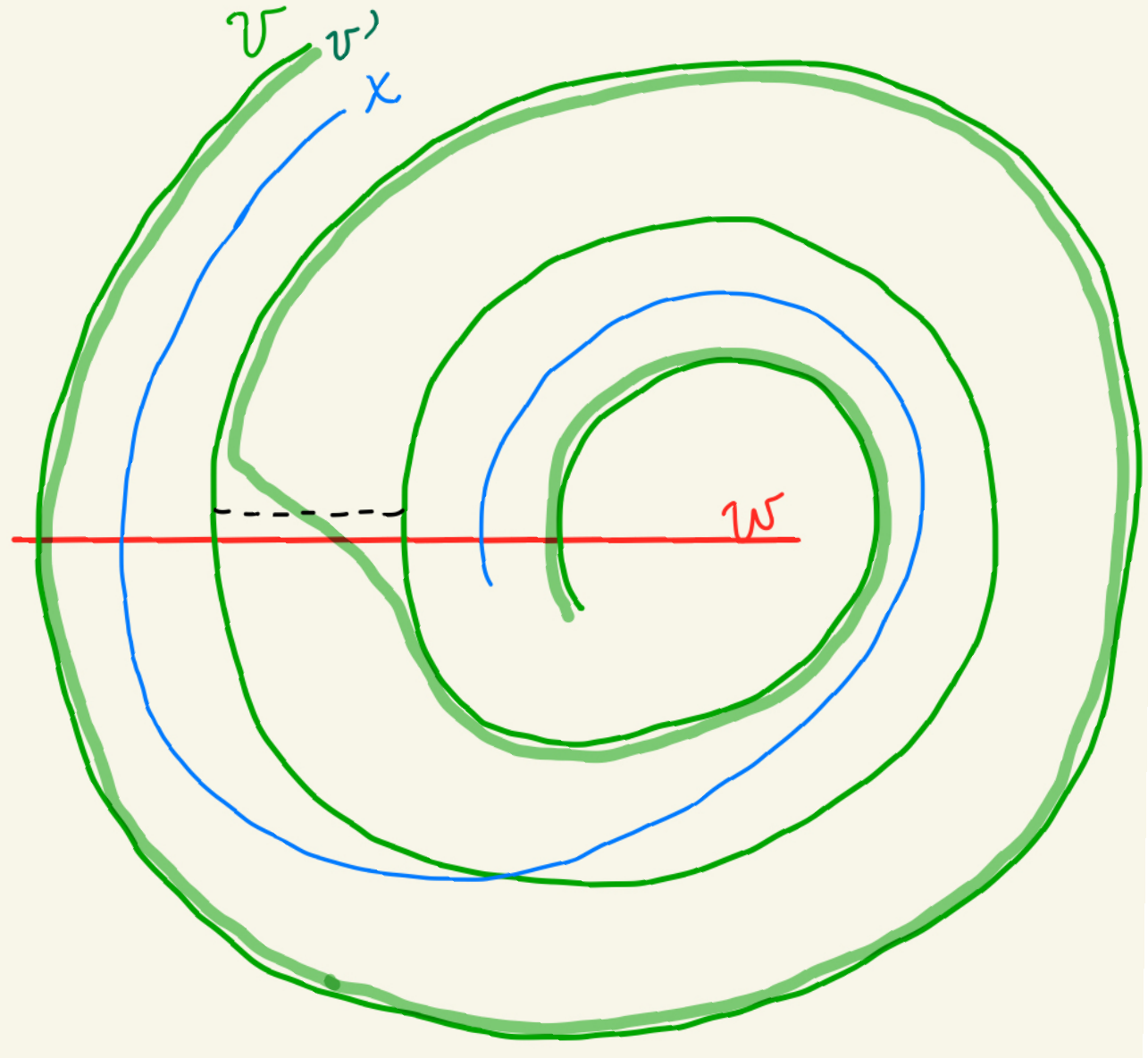}
  \caption{An example to illustrate the proof of Theorem \ref{theorem:main}: the curve $v'$ is the image of $v$ under spiral surgery. The curve $x$ is a curve with $d(x,v) >1$ and $d(x,v') =1$. }
  \label{fig:distance-preserved}
\end{figure}

The assumption that $d(x, v) > 1$ but $d(x,v')=1$ means that $x$ and $v$ only intersected along the interior of the spiral that was deleted by surgery. Since we can assume that the curves are in minimal position, $x$ and $v$ intersect along the spiral only once. See Figure \ref{fig:distance-preserved}. However, this gives us a contradiction thanks to Lemma \ref{lemma:spiral-dot-graph}: as $x$ crosses $v$, the dot graphs across the interior of the spiral would change. This implies that there was no spiral surgery possible. We conclude that there is no such $x$, so $d(v',w) = d(v,w)$. 
  \end{proof}
  
Theorem \ref{theorem:main} tells us that the spiral width and the stacked, extended dot graph above the spiral interior are necessary and sufficient for recognizing when spiral surgery will preserve distance. 
 \section{Conclusion \label{sec:conclusion}}
 
 The basic problem that was studied in this paper is how to reduce intersection number between two curves without reducing distance. The tools that we used were closely related to those in  \cite{BMM}, where the concept of an efficient geodesic joining two vertices in the non-separating curve graph first appeared.  Reference arcs, defined in \cite{BMM}, give a natural metric for the complexity of the geodesics when we consider them in this paper's context: the polygonal decomposition. In particular, certain surgeries that had played a major role in \cite{BMM} led us in a natural way to the discovery of {\it spiral surgery}.  
 
 In Section \ref{ss:extended-dot-graphs}, we formalized how a surgery can reduce the intersection number between two curves while preserving both the length and the efficiency of a given efficient path between them.
 Section \ref{ss:main-theorem} then applied these results to a specific surgery along sequences of rectangles in $\Dec{v,w}$ and proved that such a surgery can maintain distance in $\mathcal{C}(S)$ while reducing the intersection number.

 The proof of Theorem \ref{theorem:main} provides a new lens through which one can view the local geometry of $\mathcal C(S)$.
 Let $(v,w)$ be a filling pair, and let $\mathcal G = \{\mathcal G_1, \ldots, \mathcal G_N\}$ be the efficient geodesics between $v$ and $w$.
 Let $\mathbb{S}(\cdot)$ denote the application of a surgery to a curve or set of curves on $S$. 
 As we've seen here and in \cite{BMM}, the operator $\mathbb{S}(v,w)$ implicitly operates on all paths between $(v,w)$, so for a path $p \subset \mathcal C(S)$, we can write $\mathbb{S}(p) = \{\mathbb{S}(c)| c \in p\}$.

One can also restrict attention to the action of $\mathbb{S}$ on the set of geodesics $\mathcal G$ and, in particular, the subsets of $\mathbb{S}(\mathcal G)$ with length equal to $k$, $\mathbb{S}_k(\mathcal G) = \{ \mathcal G_i \mid \, |\mathbb{S}(\mathcal G_i)| = k\}$.

We consider when $\mathbb{S}$ changes the distance between $(v,w)$:
 $\mathbb{S}(\mathcal G) = \cup_{k=1}^{d(v,w)-1}\mathbb{S}_k$ or $\mathbb{S}(\mathcal G) = \cup_{k=d(v,w)+1}^\infty \mathbb{S}_k$, respectively.
 We now know that, in the case of spiral surgery, the former occurs when the assumptions of Theorem \ref{theorem:main} are violated.
 Regarding the latter, Aougab and Taylor provide an algorithm to construct an infinite geodesic ray, using repeated applications of Dehn twists \cite{AT}. 
 At each step of their algorithm, a geodesic is produced that is strictly longer than the previous one. 
 We conjecture that a surgery-based algorithm of a similar flavor exists, using our machinery of $\Dec{v,w}$ and efficient geodesics. Let's explore this idea, first from the perspective of $\Dec{v,w}$. 

Figure \ref{fig:d3-to-d4}-left is a genus 2 surface cut open along a green curve $v$ that is distance 3 from red curve $w$.  In Figure \ref{fig:d3-to-d4}-right, the same surface is cut open along a modified green curve $v '$ paired with the same red curve $w$.  The difference between the surfaces $S-v$ and $S-v'$ is that there are two additional rectangles in $S-v'$ marked by edges of $w$ between the $6$-gons. The distance between $v'$ and $w$ is 4 (confirmed by MICC). In fact, Figure \ref{fig:d3-to-d4}-right is the same decomposition as that in Figure \ref{fig:d4-polygons-only}; here we have glued the polygons of Figure \ref{fig:d4-polygons-only} together along their $w$-edges to get $S-v'$. We obtained $v'$ from $v$ by performing two operations, each the inverse of a spiral surgery, a transformation we call {\em spiral addition}. 

This spiral addition that increases distance is related to the tools used in \cite{AT}, where the growth of the minimum intersection number between vertices in the curve graph was studied with respect to distance and genus. It is also related to the tools used in \cite{PS} and most recently \cite{Ras}, where the hyperbolicity of the non-separating curve graph was studied with an emphasis on the techniques of combinatorial topology. 

\begin{remark}Comparing Figure \ref{fig:d3-to-d4}-right to  Figure \ref{fig:d3-to-d4}-left, and consulting Figure \ref{fig:curlicue_example_with_4gon}, one sees that two one-rectangle spirals have been surgered in the passage from the right sketch to the left sketch.  The data we obtained from MICC had told us that $i_{min}(4,2,(0,2,0,0)) = 12$, therefore, by Theorem \ref{theorem:main}, the distance would necessarily have gone down to 3 if we had deleted either one-rectangle spiral instead of both. Indeed, computations with MICC revealed even more to us:  if the left spiral in the right sketch had been made arbitrarily long,   and edge numbers increased appropriately, we would still have a pair of filling curves on a surface of genus 2 whose intersection number increased arbitrarily but whose distance remained at 4. This is essentially the construction that was given by Leasure in \cite{Lea}, when he proved that it is possible for intersection number to increase arbitrarily while distance remains fixed.  \end{remark}
\begin{figure}[ht]
  \centering
  \includegraphics[width=2.5in]{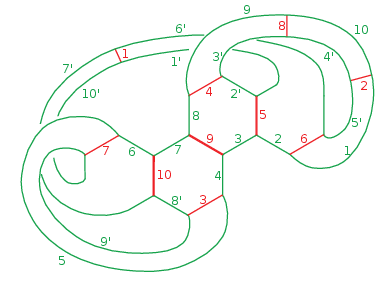}
\hspace{.5cm}
\includegraphics[height=2.5in]{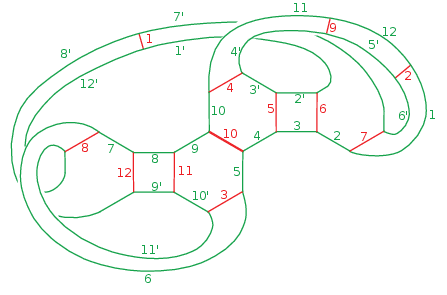}
\caption{
Left: a genus 2 surface cut open along a green curve distance 3 from the red curve. Right: same surface cut open along a curve distance 4 from the same curve. Note this is the polygon decomposition from Figure \ref{fig:d4-polygons-only}. }
  \label{fig:d3-to-d4}
\end{figure}

  \begin{figure}[ht]
  \centering
  \includegraphics[width=2in]{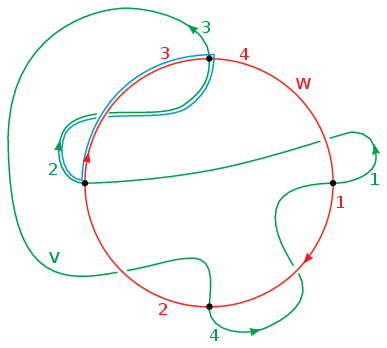} 
  \hspace{.3in}
    \includegraphics[width=1in]{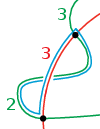}
  \hspace{.4in}
\includegraphics[width=2in]{figures/example_with_spiral.eps}

\caption{The distance-preserving transformations of these curve pairs, from left to right, are given by spiral addition. As noted earlier, the decomposition on the left consists of a single $12$-gon and a single $4$-gon, on the right, a single $12$-gon and a two $4$-gons. In both cases, $d(v,w) = 3$. }
 \label{fig:spiral-addition-as-dehn-twist}
\end{figure}
  \begin{figure}[ht]
  \centering
  \includegraphics[width=1.5in]{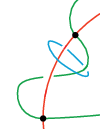} 
  \hspace{.5in}
    \includegraphics[height=1.5in]{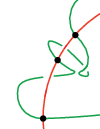}
  \hspace{.2in}

\caption{The realization of spiral addition by a Dehn twist of the green curve $v'$ about the blue curve. }
 \label{fig:spiral-addition-as-dehn-twist-2}
\end{figure}

To explain these connections, let's return to the example in $\S$\ref{subsection:motivating eg}. See Figure~\ref {fig:spiral-addition-as-dehn-twist}, where we have reversed the order of the two sketches in Figure~\ref{fig:curlicue_example}, and added a new sketch in the middle. Recall that filling pairs such as the curves $v$ (in green) and $w$ (in red) intersect many times and so determine in a natural way a collection of {\it bicorn curves}  \cite{PS,Ras}. A bicorn curve is a curve formed from the filling pair $v\cup w$ as the union of two simple arcs, one a subarc of $v'$ and the other a subarc of $w$, where the arcs intersect only at their endpoints.  In the example in Figure~\ref{fig:spiral-addition-as-dehn-twist} the bicorn, shown in red and green,  is formed from arc 2 of $v'$ and arc 3 of $w$ in the left sketch.  Its push-off from $v$ and $w$ into the interior of the positively oriented surface $S$, which we denote $\beta$, intersects $v'$ once, and in the figure is blue.  Figure~\ref{fig:spiral-addition-as-dehn-twist-2}  shows that $v$ is the image of $v'$ under a Dehn twist $T_\beta$ about the push-off $\beta$ of the bicorn. Thus, the addition of an $m$-rectangle  spiral will replace $v'$ with $T_\beta^m(v')$,  relating our work to the methods used in \cite{AT} to extend a given geodesic.
  
The example in Figure \ref{fig:d3-to-d4} above was constructed similarly. In Figure \ref{fig:d3-to-d4}-left, there are two bicorn curves whose push-offs we perform simultaneous Dehn twists around: red 5 $\cup$ green 2, and red 10 $\cup$ green 7. A bicorn curve in the decomposition can be recognized by the same color edge meeting both ends of the other color edge.  Using these bicorns to increase the distance as in these examples only works to take us from distance 3 to distance 4, though. That is, these simple additions won't get the curves any further apart in the curve graph. This is because the bicorn curve in these examples intersects each of $v'$, $v$, and $w$ once, so it is always distance 2 from each. We conjecture that there is a generalization of this spiral addition that leads to a distance-increasing algorithm as well-controlled as spiral surgery is. 
 
 All this is to say that (i) spiral reduction and spiral addition are closely related problems, and (ii) connections exist between our work and the problems solved in \cite{AT} and \cite{PS}, where Dehn twists along generalized bicorns are the primary tools for creating paths between vertices in the curve graph, and extending geodesics to longer ones.  Work is in progress on both of these problems.

{\bf Future directions for student research:}  We have noticed  emerging from this work many questions for an interested student to consider. We haven't studied the decompositions for $d=3$ at all, and it would be an interesting question to explore such decompositions more fully. In a perhaps related direction of possible study for students, the surgeries we analyzed in this paper, which reduce the intersection number by reducing the number of rectangles in $\Dec{v,w}$, could also be applied to $2k$-gons to divide them into smaller $2m$- and $2n$-gons. For example, in the Appendix of this paper, we reduce intersection number to the minimum known for genus $2$ and a decomposition consisting of one $12$-gon. It is possible to perform what one could call an inverse  surgery ``across the $12$-gon" to divide it into two $8$-gons, while preserving distance, and then we were able to further reduce the intersection number to $12$ by surgeries across rectangles. This was only a single example, but it raised some exciting questions. We needed to increase intersection number when we divided the $12$-gon into two $8$-gons, before we we were able to reduce intersection number further. Could this non-monotonic reduction of intersection number been avoided? 

Finally, we point out that spiral surgery is, of course, not the only way that one might reduce intersection number while preserving distance. We explore some non-spiral surgeries in the appendix, Section \ref{sec:Hempel}, which are still $-+$ or $+-$ surgeries across rectangles. Given that we only studied spirals, not $k$-spirals, which are the types of spiraling that appear in the Hempel example, a more formal analysis of $k$-spiral surgery (including a definition!,) would be a valuable project. We also recognize that we did not consider $++$ or $--$ surgeries, which necessarily change the decomposition of a surface (this follows from Euler characteristic arguments like those in Section \ref{ss:decomposition}.) 

There are many exciting directions for this work, and one underlying hope is that by focusing on these low-distance examples, one might gain an intuition for distance 4 that matches the simplicity of distance 3's equivalence to filling. 
  
\section{Appendix: the Hempel example \label{sec:Hempel}}

In Saul Schleimer's 2006 notes on the curve complex \cite{SS}, he describes an example given by John Hempel of a pair of filling curves $(v,w)$ on a genus $2$ surface with $i(v,w)=25$, which were asserted to be distance $4$. Until \cite{BMM} and \cite{MICC}, this was the only explicit example of a purported distance 4 curve pair. Using the efficient geodesic algorithm developed in \cite{BMM}, it was proved that the Hempel example is indeed distance $4$ \cite{GMMM}.  

\begin{figure}[ht]
  \centering
  \includegraphics[height=2in]{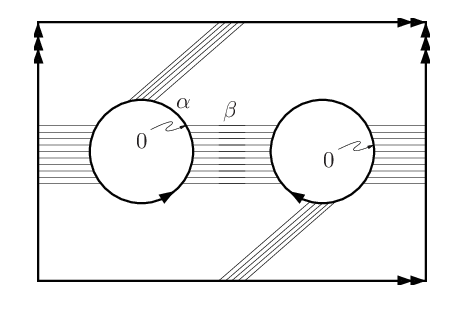}
\caption{The sides of the rectangle are glued as shown. The two circles are glued by a reflection followed by $4/25$ of a rotation to make the marked points agree. The result is a closed genus two surface. The circles become a simple closed curve $\alpha$ and the light lines close up to give $\beta$. We will call $\alpha$ and $\beta$ $v$ and $w$. Figure from \cite{SS}.}
  \label{fig:SS} 
\end{figure}

We perform surgery on the curves in the Hempel example to reduce their intersection number to $16$ while preserving their distance. The surgery we perform in this example is similar to, but not exactly, the spiral surgery of this paper. It is $+-$ or $-+$ surgery across rectangles, guided by the polygonal decomposition as the spiral surgery was. We note that, while we have not formally defined $k$-spiral surgery, some of these surgeries are across $k$-spirals. We used MICC to confirm that each surgery in this example resulted in a curve pair with distance $4$. We draw our surgery arcs slightly differently in this example than we did in Section \ref{subsection:spiral surgery}, because it made the transformation of the curves more manageable and a little easier to draw. Below are the two renditions of surgery arcs across a $4$-gon and the indicated piece of $v$ that would be deleted by surgery across each arc. 

\begin{figure}[ht]
  \centering
  \includegraphics[width=5in]{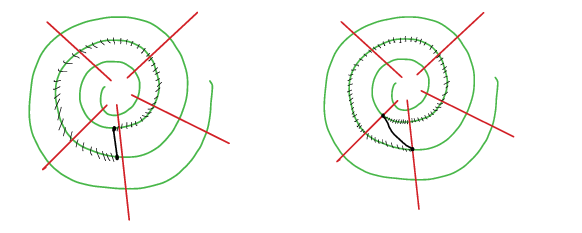}
\caption{On the left is the surgery arc used throughout the paper. On the right is the surgery arc we use in this example.}
  \label{fig:surgery-arcs} 
\end{figure} 

First, we describe the curve pair as given in the example. Cutting $S$ along $v$ and $w$ yields one $12$-gon and $16$ $4$-gons. Gluing the $4$-gons along arcs of $w$, we get three bands of rectangles pairing $w$-edges of the $12$-gon; a grey band of length 4 and two (pink and blue) each of length 8. 

While cyclic embeddings like those of Figure \ref{fig:curlicue_example} are visually appealing, they are impractical for computations relevant for this discussion. 
We need a straightforward representation of curve pairs that allow for easy visual inspection of $\Dec{v,w}$.
Our preferred choice is the {\it Menasco ladder}, first defined in \cite{GMMM}.

\begin{figure}[ht]

  \centering

  \includegraphics[width=6in]{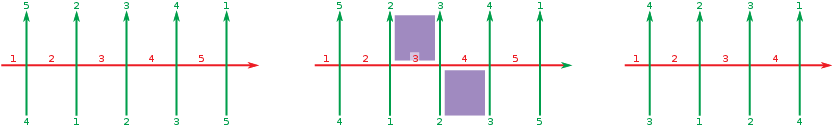}
  \caption{Left and right are the Menasco ladder representations of $(v, w)$ and $(v',w)$ from Figure \ref{fig:curlicue_example}.
  In the middle we have copied the left ladder and shaded the rectangle that was removed from the decomposition $\Dec{v,w}$ when we performed surgery.}
  \label{fig:example-of-ladder}
\end{figure}

To produce the Menasco ladder in Figure \ref{fig:example-of-ladder} from the cyclic representations of the curve pairs in Figure \ref{fig:curlicue_example}, we cut open $w$ on the interior of its arc labeled 1, and we cut open each arc of $v$ between each pair of intersection points. Laying the cut-open $w$ horizontally, such that $w^1, \ldots, w^n$ are listed sequentially, and the cut-open arcs of $v$ vertically, we get a sideways ``ladder", which we use to present the curves in a tabular form. We have two vectors from the ladder, one for the top labels of $v$ and one for the bottom. (The Menasco ladder is also the primary way of inputting the curves into MICC for analysis.) In Figure \ref{fig:example-of-ladder}, we have curves represented by ladder $[5,2,3,4,1],[4,1,2,3,5]$ transformed via surgery into curves represented by ladder $[4,2,3,1], [3,1,2,4]$. 

One can recognize spirals in the Menasco ladder notation in much the same way that one recognizes single rectangles: as subsequences of numbers along the ladder that repeat once on the opposite side, then increase by 1 and repeat again on the other side, with the increase/repetition continuing with each ``spoke" of the spiral.

 We begin with the original ladder notation for the Hempel example from MICC: 

\begin{align*}
(v, w)&=\\
       &[1, \,7, {\bf 18, 24,} \,5, 16, 12, 8, 19,25,6,17,23,4,15,11,22,3,14,10,21,2,13, 9,20]\\
       &[25,6, 17, 23, 4, 15, 11, 7, {\bf 18,24,}\, 5,16,22,3,14,10,21,2,13, 9,20,1,12, 8,19]
\end{align*}

\begin{figure}[ht]
  \centering
  \includegraphics[width=6in]{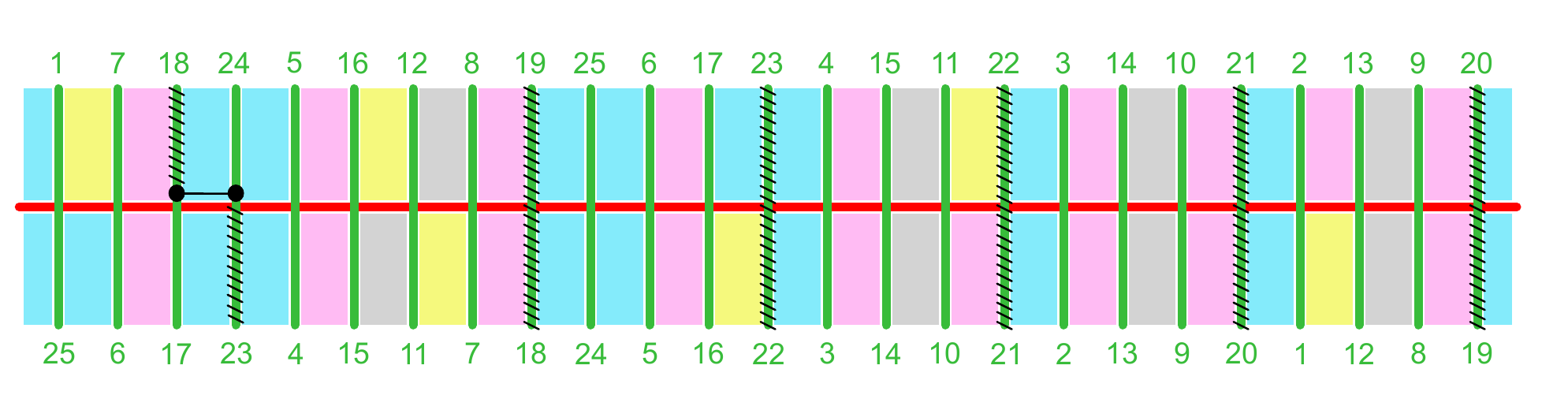}
\caption{The curves from Figure \ref{fig:SS} represented as a Menasco ladder, with surgery arc in black, and hashmarks indicating the piece of $v$ that will be deleted by surgery. The $12$-gon is yellow, and the bands are blue, pink, and grey.}
  \label{fig:i25} 
\end{figure} 

After this surgery, the face decomposition of $S - (v', w)$ still consists of a single $12$-gon and some number of $4$-gons, $i(v', w) = 19$, and the ladder given by: 

\begin{align*}
(v', w)&=\\
       &[7,18,5,16,12,8,19,6,17,4,15,11,3,14,10,2,13,9,1]]\\
       &[6,17,4,15,11,7,18,5,16,3,14,10,2,13,9,1,12,8,19]
\end{align*}

\begin{figure}[ht]
  \centering
  \includegraphics[width=6in]{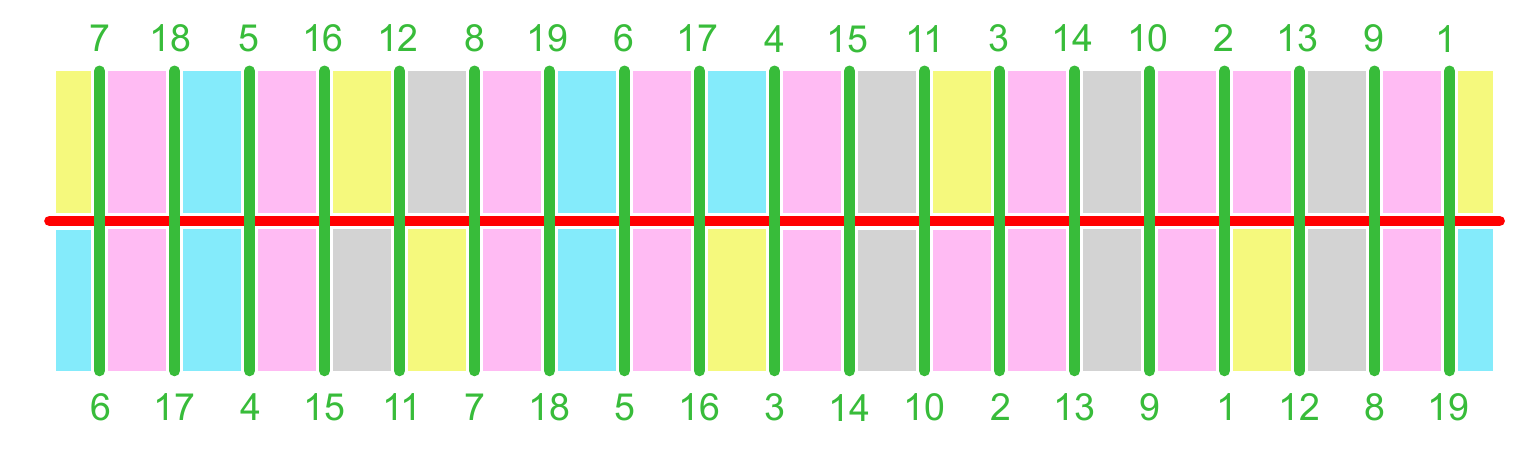}
\caption{The curves after the surgery indicated in Figure \ref{fig:i25}, with relabeling. The $12$-gon is yellow, and the bands are blue, pink, and grey. Any further surgery across the bands will result in reduced distance.}
  \label{fig:i19} 
\end{figure} 

At this point, we have three bands as before: a blue band of length 3, a grey band of length 4, and a pink band of length 8.  Further surgery on $v'$ is impossible:
\begin{enumerate}
\item The yellow band of length 3 is already minimal, we can't reduce the number of $4$-gons further without reducing the distance from $v_1$ to $w$. 
\item Here we rule out the surgery by considering the width of the band. The width of the pink band of length 8 is  $min\{11, 19-11=8\}$, so surgery along that band would result in $i(v'',w) =11$, which is less than the minimum $i$ for distance $4$ on genus $2$. 
\item The grey band of length 4 has width of min$\{4, 19-4=15\}=4$, so surgery along that band would result in a curve $v''$ with $i(v'', w)=15$. However, it is easy to check that any surgery along this band results in the $v'$-edges labeled $11$ and $12$ being deleted. As a result, two edges of the $12$-gon previously sharing the vertex separating $v$-edge $11$ and $12$ now share a single edge, so $d(v'',w)=3$; that is, there is a $v''_1$ distance $2$ from $w$.
\end{enumerate}

We reverse the roles of $v'$ and $w$ by gluing the $2k$-gons of the decomposition along $v$-edges and considering the surface $S-w$. We then look for surgeries to transform $w$ into a curve $w'$ that is the same distance from $v'$ but with smaller $i(v', w')$. 
We have the following ladder notation for $(w, v')$:

\begin{align*}
(w,v')&=\\
       &[5, 17, 14, 11, 8, 1, 6, 18, 4, 16, {\bf 13, 10}, 3, 15, 12, 9, 2, 7, 19]\\
       &[6, 18, 15, 12, 9, 2, 7, 19, 5, 17, 14, 11, 4, 16, {\bf 13, 10}, 3, 8, 1]
\end{align*}

The face decomposition of $S-(w, v')$ is, of course, the same as that of $S-(v', w)$.  We could have glued the $4$-gons along their $v'$-edges without reordering the ladder, but it is easier to analyze the bands and possible surgeries on their $w$-edges with the ladder notation showing the long $w$-edges. We now have one long orange band whose long sides are identified with the long sides of a shorter dark grey band, and one long purple band that almost forms a spiral (long sides identified with themselves but only on one side.) We will perform the surgery on the purple band. We note that the purple band has width $3$, so after surgery the intersection number will remain above the minimum for this distance and genus. 

\begin{figure}[ht]
  \centering
  \includegraphics[width=6in]{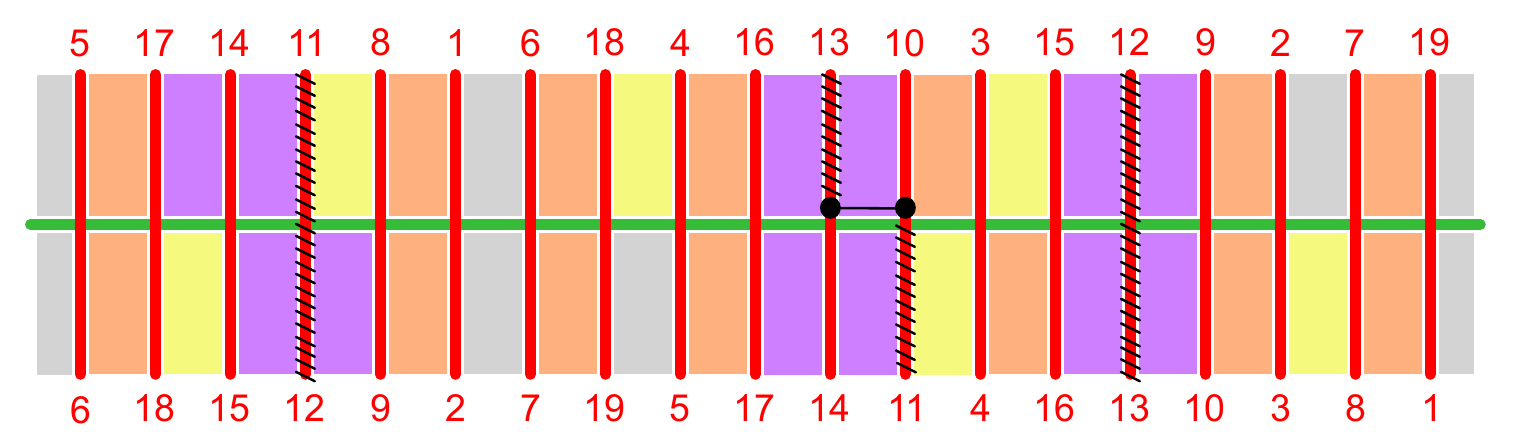}
\caption{The curves from Figure \ref{fig:i19} with $v'$ and $w$ reversed in the ladder representation. The $12$-gon is yellow, and the bands are purple, orange, and dark grey. A surgery arc is indicated in black, and the piece of $w$ that will be deleted by surgery is hashed out in black.}
  \label{fig:i19-reversed} 
\end{figure}

The ladder notation for $(w', v')$ after this surgery on $w$ is given below, and $d(w',v')=4$. 

\begin{align*}
(w',v')&=\\
       &[5, 14, 11, 8, 1, 6, 15, 4, 13, 10, 3, 12, 9, 2, 7, 16]\\
       &[6, 15, 12, 9, 2, 7, 16, 5, 14, 11, 4, 13, 10, 3, 8, 1]
\end{align*}

\begin{figure}[ht]
  \centering
  \includegraphics[width=6in]{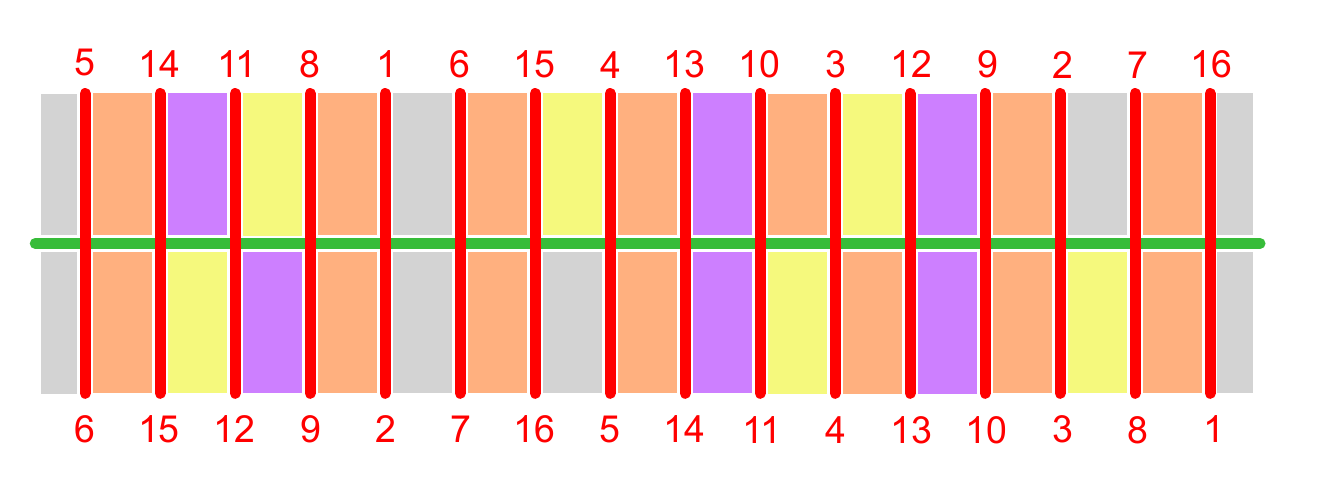}
\caption{The curves $w',v'$ with minimum intersection number ($16$) for the distance ($4$,) genus ($2$,) and decomposition (one $12$-gon.) The $12$-gon is yellow, and the bands are purple, orange, and dark grey.}
  \label{fig:i16} 
\end{figure}

We observe that there is no further surgery across rectangles available to reduce the intersection number of $v'$ and $w'$ further without also reducing the distance between the curves: for consider the three bands in the decomposition $S-\{w', v'\}$, as pictured in the figure above. The orange band has width $7$, and the dark grey band has width $5$, which means surgery on either band will take the intersection number too low for distance $4$ curves on genus $2$. Let $\mathcal{B}$ be the purple band. Surgery on $\mathcal{B}$ is only possible if it doesn't result in the deletion of $w'$-edges $11$ and $12$. Otherwise, a surgery along $\mathcal{B}$ would result in the $v'$ edges of the $12$-gon adjacent to the vertex between $w'$-edges $11$ and $12$ being identified. The $w'$-edges of $\mathcal{B}$ are $12-13-14$ and $9-10-11$; no surgery arc from one side of the band to the other does not include edges $11$ and $12$ (the only possible short surgery arcs are $9$ to $13$ and $10$ to $14$).  Now, we have only demonstrated that this particular sequence of surgeries has now reached a dead end.  However, since MICC have us the complete dataset of all isotopy classes of pairs of curves of distance $\geq 4$ with $i(v,w) \leq 25$ and genus $g = 2$, we know that the minimum intersection number for a curve pair of distance $4$, with a decomposition consisting of one $12$-gon and some number of $4$-gons, is indeed 16.\\

This example shows how critical the use of MICC was in our work. It also shows that, while we were able to prove that spiral surgery preserves distance, it is clearly not the only surgery, even across rectangles, that will preserve distance. The work continues.

\

\noindent Joan S. Birman,  Department of Mathematics, Columbia University, New York, NY 10027 \\
$<$jb@math.columbia.edu$>$

\

\noindent Matthew J. Morse, Department of Computer Science, Courant Institute for Mathematical Sciences, New York University, New York, NY 10012 $<$mmorse@cs.nyu.edu$>$

\

\noindent Nancy C. Wrinkle,  Department of Mathematics, Northeastern Illinois University, Chicago, IL 60640 $<$n-wrinkle@neiu.edu$>$

\

\end{document}